\newtheorem*{theoA}{Theorem A}
\newtheorem*{theoB}{Theorem B}
\newtheorem*{theoC}{Theorem C}
\newtheorem*{theoD}{Theorem D}
\newtheorem*{theoE}{Theorem E}
\newtheorem*{conjA}{Conjecture A}
\newtheorem*{exmA}{Example A}
\newtheorem*{remA}{Remark A}
\newtheorem{ques}{Question}[section]
\newtheorem{theo}{Theorem}[section]
\newtheorem{lem}{Lemma}[section]
\newtheorem{cor}{Corollary}[section]
\newtheorem{exm}{Example}[section]
\newtheorem{rem}{Remark}[section]
\newcommand{\ol}{\overline}
\newcommand{\be}{\begin{equation}}
\newcommand{\ee}{\end{equation}}
\newcommand{\beas}{\begin{eqnarray*}}
\newcommand{\eeas}{\end{eqnarray*}}
\newcommand{\bea}{\begin{eqnarray}}
\newcommand{\eea}{\end{eqnarray}}
\numberwithin{equation}{section}
\begin{document}
\title[M\MakeLowercase{eromorphic solutions of} F\MakeLowercase{ermat type differential}...]{\Large M\MakeLowercase{eromorphic solutions of a certain type of nonlinear differential equation}}
\date{}
\author[S. M\MakeLowercase{ajumder}, N. S\MakeLowercase{arkar and} D. P\MakeLowercase{ramanik}]{S\MakeLowercase{ujoy Majumder}, N\MakeLowercase{abadwip} S\MakeLowercase{arkar$^{*}$ and} D\MakeLowercase{ebabrata Pramanik}}
\address{Department of Mathematics, Raiganj University, Raiganj, West Bengal-733134, India.}
\email{sm05math@gmail.com}

\address{Department of Mathematics, Raiganj University, Raiganj, West Bengal-733134, India.}
\email{naba.iitbmath@gmail.com}

\address{Department of Mathematics, Raiganj University, Raiganj, West Bengal-733134, India.}
\email{debumath07@gmail.com}

\renewcommand{\thefootnote}{}
\footnote{2010 \emph{Mathematics Subject Classification}: 39B32, 34M05, 30D35.}
\footnote{\emph{Key words and phrases}: Differential equation, meromorphic solution, growth.}
\footnote{*\emph{Corresponding Author}: Nabadwip Sarkar.}
\renewcommand{\thefootnote}{\arabic{footnote}}
\setcounter{footnote}{0}

\begin{abstract} Our paper focuses on investigating the existence and possible forms of solutions to the nonlinear differential equation
\beas f^m+\big(Rf^{(k)}\big)^n=Qe^{\alpha},\eeas
where where $k$, $m$ and $n$ are three positive integers, $Q$ and $R$ are non-zero rational functions, and $\alpha$ is a polynomial.
We examine this equation systematically for all positive integral values of 
$k$, $m$ and $n$, providing comprehensive conditions under which entire or meromorphic solutions may exist. Our results offer substantial improvements over those of Tang and Liao \cite{TL1} and Han and L\"{u} \cite{HL1}, particularly with respect to the existence criteria and the explicit analytic forms of admissible solutions. These findings contribute meaningfully to the broader study of nonlinear differential polynomials and Fermat-type functional equations in complex analysis.
\end{abstract}
\thanks{Typeset by \AmS -\LaTeX}
\maketitle

\section{{\bf Introduction}}
It is known that, Fermat's last theorem states that the equation $x^n+y^n=1$ when $n\geq 3$ does not admit nontrivial rational solutions (see \cite{TW1, W1}). The equation $x^2 + y^2=1$ does admit nontrivial rational solutions. The following functional equation
\bea\label{ne1} f^n(z)+g^n(z)=1,\eea
where $n$ is a positive integer can be regarded as the Fermat diophantine equations $x^n+y^n=1$ over function fields. 
In 1927, Montel \cite{M1} proved that the equation (\ref{ne1}) has no transcendental entire solutions for $n \geq 3$.
Also in Theorem 4 \cite{FG2}, we see that the equation (\ref{ne1}) actually has no non-constant entire solutions when $n>2$. 
Gross \cite[Theorem 3]{FG1} proved that equation (\ref{ne1}) has no non-constant meromorphic solutions when $n>3$. 
For $n=2$, Gross \cite[Theorem 4]{FG2} found that the equation (\ref{ne1}) has entire solutions of the from $f(z)=\sin (h(z))$ and $g(z)=\cos (h(z))$, where $h(z)$ is an entire function.
In Theorem 1, Gross \cite{FG1} proved that all meromorphic solutions of the equation $f^2(z)+g^2(z)=1$ are the form 
\[f(z)=\frac{1-\alpha^2(z)}{1+\alpha^2(z)}\;\;\text{and}\;\;g(z)=\frac{2 \alpha(z)}{1+\alpha^2(z)},\]
where $\alpha(z)$ is a non-constant meromorphic function. For $n=3$, Baker \cite[Theorem 1]{Baker} proved that the only non-constant meromorphic solutions of equation (\ref{ne1}) are the functions
\[f=\frac{1}{2}\left\{1+\frac{\wp^{(1)}(u)}{\sqrt{3}}\right\} / \wp(u)\;\;\text{and}\;\; g=\frac{\eta}{2}\left\{1-\frac{\wp^{(1)}(u)}{\sqrt{3}}\right\} / \wp(u)\]
for a non-constant entire function $u$ and a cubic root $\eta$ of unity, where $\wp$ denotes the Weierstrass function satisfying $(\wp^{(1)})^2=4\wp^3-1$. 

In 1970, Yang \cite{Y1} investigated the following Fermat-type equation
\bea\label{ne2} f^{m}(z)+g^{n}(z)=1\eea
and obtained that equation (\ref{ne2}) has no non-constant entire solutions when $\frac{1}{m}+\frac{1}{n}<1$ (see the proof of Theorem 1). Therefore it is clear that the equation (\ref{ne2}) has no non-constant entire solutions when $m>2$ and $n>2$. However, for the case when $m=n=2$ and $g=f^{(1)}$, many authors investigated the existence of solutions of the equation (\ref{ne2}). As a result, successively several research papers were published (see \cite{GY, HL1, LL, LQ, QY, TL1, WY1, WCH, YL1}).

\smallskip
It is an interesting and quite difficult question to prove that a complex nonlinear differential equation has no meromorphic solution or to find out all the meromorphic solutions of a complex nonlinear differential equation if such solutions exist, even for a simple differential equation. In 2004, Yang and Li \cite{YL1} study for the meromorphic solutions of the following type of the differential equation
\[f^2+b(L(f))^2=a,\]
where $L(f)$ is a linear differential polynomial in $f$, $a$ and $b$ are small meromorphic functions of $f$. Actually they obtained the following results.

\begin{theoA}\cite[Theorem 1]{YL1} Let $n(\geq 1)$ be an integer and let $a, b_0, b_1,\ldots, b_{n-1}$ be polynomials and $b_n$ be a non-zero constant. Let $L(f)=\sum_{k=0}^n b_k f^{(k)}$. If $a\not\equiv 0$, then a transcendental meromorphic solution of the following equation:
\[f^2+(L(f))^2=a,\]
must have the form $f(z)=\frac{1}{2} \left(P(z)e^{R(z)}+Q(z)e^{-R(z)}\right)$, where $P$, $Q$ and $R$ are polynomials and $PQ= a$. If, furthermore, all $b_k$ are constants, then $\deg(P)+\deg(Q) \leq n-1$. Moreover, $R(z)=\lambda$ is a non-zero constant, which satisfies the following equations:
\[\sideset{}{_{k=0}^n}{\sum} b_k \lambda^k=\frac{1}{\iota},\;\;\sideset{}{_{k=j}^n}{\sum} b_k {}^k C_j\lambda^{k-j}=0,\; j=1, \ldots, p,\]
\[\sideset{}{_{k=0}^n}{\sum} b_k(-\lambda)^k=-\frac{1}{\iota},\;\;\sideset{}{_{k=j}^n}{\sum} b_k {}^k C_j(-\lambda)^{k-j} = 0,\;j=1,\ldots,q.\]
\end{theoA}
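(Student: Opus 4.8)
The plan is to proceed in three stages. \emph{Stage 1 (reduction to the entire case and factorization).} If $f$ had a pole of order $p\ge 1$ at a point $z_0$, then, since $b_n$ is a non-zero constant, $b_nf^{(n)}$ would have a pole of order $p+n$ at $z_0$ while each term $b_kf^{(k)}$ with $k<n$ has there a pole of order at most $p+n-1$; hence $L(f)$, and so $(L(f))^2$, would have a pole of order $2(p+n)$ at $z_0$, whereas $f^2$ has a pole of order only $2p<2(p+n)$. Then $f^2+(L(f))^2$ would have a pole at $z_0$, impossible since $f^2+(L(f))^2=a$ is a polynomial. So $f$ is transcendental \emph{entire}, and $u:=f+\iota L(f)$, $v:=f-\iota L(f)$ are entire with $uv=f^2+(L(f))^2=a\not\equiv 0$. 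Every zero of $u$ is then a zero of $a$, so $u$ has finitely many zeros; taking a polynomial $P_1$ with precisely those zeros, $u/P_1$ is entire and zero-free, so $u=P_1e^{h_1}$ for an entire $h_1$, and likewise $v=P_2e^{h_2}$. From $P_1P_2e^{h_1+h_2}=a$ the zero-free entire function $e^{h_1+h_2}$ equals the rational function $a/(P_1P_2)$, hence is a non-zero constant; absorbing it into $P_2$ we may write $v=\widetilde{P}_2e^{-h_1}$ with $P_1\widetilde{P}_2=a$.

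\emph{Stage 2 (the crux: $h_1$ is a polynomial).} From $2f=u+v$ and $2\iota L(f)=u-v$ one gets $L(u)+L(v)=\frac{1}{\iota}(u-v)$. Writing $u^{(k)}=A_ke^{h_1}$ and $v^{(k)}=\widetilde{A}_ke^{-h_1}$ — so $A_k=P_1u^{(k)}/u$ and $\widetilde{A}_k=\widetilde{P}_2v^{(k)}/v$ are entire — and putting $B=\sum_kb_kA_k$, $\widetilde{B}=\sum_kb_k\widetilde{A}_k$, the relation becomes $\big(B-\tfrac{1}{\iota}P_1\big)e^{2h_1}=-\widetilde{B}-\tfrac{1}{\iota}\widetilde{P}_2$. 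By the lemma on the logarithmic derivative, $m(r,u^{(k)}/u)=S(r,u)$ and $m(r,v^{(k)}/v)=S(r,v)$; since $v=a/u$ gives $T(r,v)=T(r,u)+O(\log r)$, it follows that $T(r,B)=T(r,\widetilde{B})=S(r,u)+O(\log r)$. If $B-\tfrac{1}{\iota}P_1\not\equiv 0$, then, using $T(r,e^{2h_1})=2T(r,e^{h_1})$ and $T(r,e^{h_1})=T(r,u)+O(\log r)$, the displayed identity forces $2T(r,u)\le S(r,u)+O(\log r)$, hence $T(r,u)=O(\log r)$; but then $u$, $v=a/u$ and $f=\tfrac12(u+v)$ would be rational, contradicting the transcendence of $f$. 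So $B\equiv\tfrac{1}{\iota}P_1$, that is, $L(u)=\tfrac{1}{\iota}u$; thus $u$ solves a linear differential equation with polynomial coefficients and non-zero constant leading coefficient $b_n$, so $u$ is entire of finite order, whence $e^{h_1}$ — and therefore $h_1$ — has finite order. Then $u'/u$ has $N(r,u'/u)=O(\log r)$ (finitely many poles) and $m(r,u'/u)=S(r,u)=O(\log r)$, so $u'/u$ is rational, and as $h_1'=u'/u-P_1'/P_1$ is rational \emph{and} entire it is a polynomial. Setting $R:=h_1$, $P:=P_1$, $Q:=\widetilde{P}_2$ gives $f=\tfrac12(Pe^{R}+Qe^{-R})$ with $PQ=a$.

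\emph{Stage 3 (the case of constant $b_k$).} Substituting $f=\tfrac12(Pe^{R}+Qe^{-R})$ into $f^2+(L(f))^2=PQ$ and using that $L(Pe^{R})=\Lambda_+(P)e^{R}$, $L(Qe^{-R})=\Lambda_-(Q)e^{-R}$ for polynomials $\Lambda_{\pm}$ (Leibniz' rule, the $b_k$ being constant), one obtains $\mathcal{U}\,e^{2R}+\mathcal{V}\,e^{-2R}+\mathcal{W}=0$ with polynomials $\mathcal{U}=\tfrac14(P^2+\Lambda_+(P)^2)$, $\mathcal{V}=\tfrac14(Q^2+\Lambda_-(Q)^2)$, $\mathcal{W}=\tfrac12(\Lambda_+(P)\Lambda_-(Q)-PQ)$. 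Since $f$ is transcendental, $R$ is non-constant, so $e^{2R}$, $e^{-2R}$, $1$ are linearly independent over the rational functions (Borel's theorem, the pairwise differences of exponents being non-constant); hence $\mathcal{U}=\mathcal{V}=\mathcal{W}=0$, which forces $\Lambda_+(P)=\pm\iota P$, $\Lambda_-(Q)=\pm\iota Q$ with opposite signs, so after relabelling $L(Pe^{R})=\tfrac{1}{\iota}Pe^{R}$ and $L(Qe^{-R})=-\tfrac{1}{\iota}Qe^{-R}$. In the first of these, $b_nP(R')^ne^{R}$ is the only term carrying $(R')^n$, and if $\deg R\ge 2$ it strictly dominates in $z$-degree while the right-hand side has degree $\deg P$ — impossible; and $\deg R=0$ would make $f$ a polynomial. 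So $\deg R=1$; write $R'=\lambda$ ($\lambda\ne 0$). Expanding $(Pe^{R})^{(k)}=e^{R}\sum_j\binom{k}{j}\lambda^{k-j}P^{(j)}$, the equation $L(Pe^{R})=\tfrac{1}{\iota}Pe^{R}$ reduces to $\sum_{j\ge 0}c_jP^{(j)}=\tfrac{1}{\iota}P$ with $c_j=\sum_{k=j}^{n}b_k\,{}^{k}C_{j}\,\lambda^{k-j}$; the $c_j$ being constants and $\deg P^{(j)}=\deg P-j$, matching coefficients degree by degree gives $c_0=\tfrac{1}{\iota}$ and $c_j=0$ for $j=1,\ldots,p$ with $p:=\deg P$ — exactly the first pair of stated identities — and the parallel computation for $Q$, with $d_j=\sum_{k=j}^{n}b_k\,{}^{k}C_{j}(-\lambda)^{k-j}$ replacing $c_j$, gives the second pair with $q:=\deg Q$. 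Finally, writing $\Phi(w):=\sum_{k=0}^{n}b_kw^{k}$, one has $c_j=\Phi^{(j)}(\lambda)/j!$ and $d_j=\Phi^{(j)}(-\lambda)/j!$, so $\lambda$ and $-\lambda$ are distinct zeros of $\Phi'$ of multiplicities at least $p$ and $q$; as $\deg\Phi'=n-1$, this yields $\deg P+\deg Q=p+q\le n-1$.

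\emph{Main obstacle.} The delicate point is Stage 2 — showing $h_1$ is a polynomial rather than a general entire function. It hinges on (i) a Nevanlinna growth comparison to rule out $B-\tfrac{1}{\iota}P_1\not\equiv 0$, and (ii) the classical fact that every solution of a linear differential equation with polynomial coefficients and non-vanishing leading coefficient is entire of finite order, which is what promotes $u'/u$, and hence $h_1'$, to a rational (so polynomial) function. The reduction in Stage 1 and the coefficient bookkeeping in Stage 3 are, by comparison, routine.
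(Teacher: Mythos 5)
The paper does not prove this statement: Theorem A is quoted verbatim from Yang--Li \cite{YL1} as background, so there is no in-paper proof to compare against. Judged on its own, your argument is correct and complete, and it follows the route one would expect (and that \cite{YL1} essentially takes): the factorization $uv=a$ with $u=f+\iota L(f)$, $v=f-\iota L(f)$, a Hadamard representation $u=P_1e^{h_1}$, a Nevanlinna growth comparison to force $L(u)=\tfrac{1}{\iota}u$, the Wittich--Valiron finite-order theorem for linear ODEs with non-vanishing leading coefficient to make $h_1$ a polynomial, and Borel's theorem plus degree bookkeeping for the constant-coefficient case. This is also exactly the toolkit the present paper deploys for its own results (the factorization $(Rf^{(k)}+\iota f)(Rf^{(k)}-\iota f)=Qe^{\alpha}$ in the proofs of Theorems \ref{t2.2} and \ref{t2.4}, Mohon'ko's lemma for the growth comparison, and coefficient comparison as $|z|\to\infty$), so your Stage 2 is the same mechanism in a special case. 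Two small points: in Stage 1 the polynomial $P_1$ must carry the zeros of $u$ \emph{with multiplicity} for $u/P_1$ to be zero-free; and the statement's phrase ``$R(z)=\lambda$ is a non-zero constant'' is literally incompatible with $f$ being transcendental --- your reading $R'(z)\equiv\lambda$, i.e. $R(z)=\lambda z$ after absorbing the constant term into $P$ and $Q$, is the intended one, and your multiplicity count for $\Phi'$ at $\pm\lambda$ then correctly yields $\deg(P)+\deg(Q)\leq n-1$.
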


In Theorem 2 \cite{YL1}, Yang and Li also proved that the differential equation $f^2+b_1^2(f^{(k)})^2=a$ has no transcendental meromorphic solution provided that $b_1(\neq 0)$ is a constant and $a$ is a non-constant meromorphic small function of $f$.
 
\begin{theoB} \cite[Theorem 3]{YL1} Let $a_1$, $a_2$ and $a_3$ be non-zero meromorphic functions. Then a necessary condition for the differential equation
\bea\label{2a}a_1f^2+a_2(f^{(1)})^2=a_3\eea
to have a transcendental meromorphic solution satisfying $T(r,a_k)=S(r,f)$, $k=1,2,3$, is $\frac{a_1}{a_3}\equiv \text{constant}$.
\end{theoB}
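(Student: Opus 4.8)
The plan is to assume that $a_1/a_3$ is non-constant and derive that $f$ has small Nevanlinna characteristic, contradicting its transcendence. Dividing (\ref{2a}) by $a_3$, write the equation as $b_1f^2+b_2(f')^2=1$ with $b_1:=a_1/a_3$, $b_2:=a_2/a_3$, so $T(r,b_1)=S(r,f)$, $T(r,b_2)=S(r,f)$, and the claim is that $b_1$ is constant. Suppose not, so $b_1'\not\equiv0$, and put $L:=f'/f$; then $m(r,L)=S(r,f)$ by the lemma on the logarithmic derivative. Differentiating the equation produces the identity
\[
b_1'f^2=-f'\big(2b_1f+b_2'f'+2b_2f''\big),
\]
which will be the main tool.

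First I would show that $f'$ has essentially no zeros, i.e.\ $N(r,1/f')=S(r,f)$. At any zero $z_0$ of $f'$, the function $f$ is holomorphic at $z_0$ and the equation itself forces $f(z_0)\neq0$ (otherwise it would read $0=1$ there). Hence, unless $z_0$ is a zero of $b_1'$ or a zero or pole of $b_1$, the right-hand side of the displayed identity vanishes at $z_0$ while its left-hand side does not --- a contradiction. So every zero of $f'$ lies over a zero of $b_1'$ or a zero or pole of $b_1$; comparing orders in the identity (and, at poles of $b_1$, in the original equation) shows that the multiplicity of such a zero of $f'$ is bounded by the local order of the corresponding coefficient singularity, whence $N(r,1/f')=S(r,f)$.

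Next I would show that $L$ is itself a small function. Dividing the equation by $b_1f^2$ gives $1+(b_2/b_1)L^2=1/(b_1f^2)$; differentiating this and clearing denominators yields
\[
2b_2L^3+2b_2LL'+b_2'L^2+2b_1L+b_1'=0 .
\]
Dividing by $L$ and isolating $b_1'/L$, the lemma on the logarithmic derivative applied to $L$ (together with $m(r,L)=S(r,f)$) gives $m(r,b_1'/L)=S(r,f)$, hence $m(r,1/L)\le m(r,1/b_1')+S(r,f)=S(r,f)$, using $b_1'\not\equiv0$. On the other hand, the poles of $1/L=f/f'$ occur only at zeros of $f'$, with multiplicity at most the order of the corresponding zero, so the first step gives $N(r,1/L)\le N(r,1/f')=S(r,f)$. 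Combining, $T(r,L)=T(r,1/L)+O(1)=S(r,f)$.

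Finally, $T(r,L)=S(r,f)$ forces $T\big(r,(b_2/b_1)L^2\big)=S(r,f)$, so from $1/(b_1f^2)=1+(b_2/b_1)L^2$ we get $T(r,b_1f^2)=S(r,f)$, hence $T(r,f^2)=S(r,f)$, i.e.\ $2T(r,f)=S(r,f)$, which is impossible for a transcendental $f$; therefore $b_1=a_1/a_3$ is constant. The genuine obstacle is the first step: the confinement of the zeros of $f'$ to the singularities of $b_1$ and $b_1'$ is immediate from the identity, but upgrading this to the counting estimate $N(r,1/f')=S(r,f)$ --- rather than merely $\overline{N}(r,1/f')=S(r,f)$ --- requires a careful local analysis of the multiplicities of $f'$ at those points.
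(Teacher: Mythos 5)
Theorem B is quoted in the paper from Yang--Li \cite{YL1} without proof, so there is no internal argument to compare yours against; I can only judge the proposal on its own merits. On those merits it is essentially correct and follows the strategy one would expect: normalize to $b_1f^2+b_2(f')^2=1$, differentiate, and show that $L=f'/f$ is a small function, which collapses $T(r,f)$. I checked the key identity $2b_2L^3+2b_2LL'+b_2'L^2+2b_1L+b_1'=0$ (it is the differentiated equation divided by $f^2$, using $f''/f=L'+L^2$), and the chain $m(r,1/L)\le m(r,1/b_1')+m(r,b_1'/L)=S(r,f)$, $N(r,1/L)\le N(r,0;f')=S(r,f)$, $T(r,L)=S(r,f)$, $2T(r,f)=S(r,f)$ is sound.

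The one place that needs repair is the list of exceptional points in your first step. You confine the zeros of $f'$ to the zeros of $b_1'$ and the zeros and poles of $b_1$, but the poles of $b_2$ (equivalently of $a_2/a_3$) must also be admitted: at such a point the bracket $2b_1f+b_2'f'+2b_2f''$ may itself have a pole, so the clean dichotomy ``right side vanishes, left side does not'' fails there. The count still closes: if $f'$ has a zero of order $q$ at a pole of $b_2$ of order $p$ (with $b_1,b_1'$ regular and nonvanishing there), the original equation forces $2q\ge p$, and if $2q>p$ then $f(z_0)\neq 0$ and comparing orders in the differentiated identity forces $2q\le p+1$; either way $q\le p$, so the total contribution is at most $N(r,b_2)=S(r,f)$. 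Together with your order comparison at zeros of $b_1'$ (which gives $q\le \mathrm{ord}(b_1')$ there) this yields the full counting estimate $N(r,0;f')=S(r,f)$ that Step~2 genuinely requires, as you correctly flag. With that case added, the argument is complete.
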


Eq. (\ref{2a}) may have some entire solutions, when $a_1/a_3$ is constant. For example, the function $f(z)=\frac{1}{2}(e^{z^2/{2\iota}}+e^{-z^2/{2\iota}})$ satisfies $z^2f^2(z)+(f^{(1)}(z))^2=z^2$.

In the same paper, Yang and Li \cite{YL1} also posed the following conjecture.
\begin{conjA}\cite[Conjecture 1]{YL1} Let $P_1$, $P_2$ and $P_3$ be non-zero polynomials. Then the equation $P_1f^2+
P_2(f^{(1)})^2=P_3$ has no transcendental meromorphic solution when $P_1/P_3$ is a non-zero constant and $P_2/P_3$ is not the square of any rational function.
\end{conjA}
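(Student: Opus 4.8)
The plan is to reduce the equation, write down the general shape of a meromorphic solution, and then test the conjecture on the simplest admissible data. Dividing by $P_{3}$ and writing $c:=P_{1}/P_{3}$ (a nonzero constant) and $R:=P_{2}/P_{3}$ (a rational function, not the square of a rational function by hypothesis), we must study $cf^{2}+R(f')^{2}=1$. If $f$ is a transcendental meromorphic solution, differentiating and cancelling $f'\ (\not\equiv 0)$ gives the linear equation $2Rf''+R'f'+2cf=0$; more usefully, set $\gamma:=\dfrac{1-\sqrt c\,f}{1+\sqrt c\,f}$, a meromorphic function with $f=\dfrac{1}{\sqrt c}\cdot\dfrac{1-\gamma}{1+\gamma}$. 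A short computation turns the equation into $R(\gamma')^{2}=c\,\gamma(1+\gamma)^{2}$, hence $\Bigl(\dfrac{\gamma'}{\sqrt\gamma\,(1+\gamma)}\Bigr)^{2}=\dfrac{c}{R}$; since $\dfrac{\gamma'}{\sqrt\gamma\,(1+\gamma)}=2\bigl(\arctan\sqrt\gamma\bigr)'$, integrating and substituting back yields
\[
 f=\frac{1}{\sqrt c}\,\cos\!\Bigl(\sqrt c\!\int R^{-1/2}\,dz+b\Bigr)
\]
for a constant $b$ and a suitable branch of the (in general multivalued) primitive. So the conjecture is exactly the claim that, when $R$ is not a square, no choice of $b$ and branch makes the right-hand side a single-valued meromorphic function on $\mathbb C$.

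That claim is false, so the honest outcome of this analysis is a counterexample rather than a proof. The simplest one: with $P_{1}=c$, $P_{2}=z$, $P_{3}=1$ we have $R=z$, which is not the square of a rational function, yet $\int z^{-1/2}\,dz=2\sqrt z$ and
\[
 f(z)=\frac1{\sqrt c}\cos\bigl(2\sqrt{cz}\bigr)=\frac1{\sqrt c}\sum_{n\ge 0}\frac{(-4c)^{n}}{(2n)!}\,z^{n}
\]
is a transcendental \emph{entire} solution of $cf^{2}+z(f')^{2}=1$, because $z(f')^{2}=\sin^{2}(2\sqrt{cz})$ and $cf^{2}=\cos^{2}(2\sqrt{cz})$. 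Counterexamples are in fact abundant: for \emph{any} nonconstant polynomial $E$ that is not a perfect square, $R=\dfrac{4cE}{(E')^{2}}$ is not a square, and $f=\dfrac{1}{\sqrt c}\cos\!\sqrt{E(z)}=\dfrac{1}{\sqrt c}\sum_{n\ge 0}\dfrac{(-1)^{n}E(z)^{n}}{(2n)!}$ is a transcendental entire solution of $cf^{2}+R(f')^{2}=1$; choosing $P_{1}=c(E')^{2}$, $P_{2}=4cE$, $P_{3}=(E')^{2}$ realizes this with polynomial data meeting every hypothesis of the conjecture. These do not even exhaust the exceptions---there are further families in which $\sqrt c\int R^{-1/2}\,dz$ mixes a square root with a logarithm---so the full list of counterexamples is genuinely intricate.

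Accordingly the real task---and what I would actually carry out---is to determine \emph{exactly} which non-square $R$ admit a transcendental meromorphic solution, equivalently exactly when $\cos\!\bigl(\sqrt c\int R^{-1/2}\,dz+b\bigr)$ is single-valued and meromorphic on $\mathbb C$. Via the representation above this becomes a question about the Riemann surface $X$ of $\sqrt R$: (i) a pole of $R$ is harmless, but a zero of $R$ of order $\ge 3$, or of order $2$ whose associated residue of $\sqrt c\,R^{-1/2}$ fails to lie in $i\mathbb Z$, forces a finite essential singularity, so the admissible $R$ already have a severely restricted zero structure; (ii) single-valuedness forces the periods of $\sqrt c\,R^{-1/2}\,dz$ over $H_{1}(X)$, the sheet-interchange increment, and the residue at infinity all to be suitably integral, which becomes impossible once $X$ has positive genus; (iii) in the remaining genus-zero case a finite analysis---keyed to the configuration of the two branch points of $\sqrt R$ and to the growth and residue of $R^{-1/2}\,dz$ at infinity---should isolate precisely the admissible $R$. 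The main obstacle is steps (ii)--(iii): the period-and-monodromy analysis of the abelian differential $R^{-1/2}\,dz$ on the hyperelliptic-type curve attached to $\sqrt R$, and in particular the careful handling of the logarithmic (residue) contributions at double zeros of $R$ and at infinity---that is where the classification really costs something.
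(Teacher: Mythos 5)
The statement is a conjecture that the paper itself immediately refutes via Tang and Liao's Example A ($P_1\equiv 1$, $P_2(z)=4z$, $P_3\equiv 1$, $f(z)=\cos\sqrt{z}$), and your proposal correctly reaches the same verdict by essentially the same counterexample: your $cf^2+z(f')^2=1$ with $f=\frac{1}{\sqrt{c}}\cos\bigl(2\sqrt{cz}\bigr)$ is Example A up to normalization. Your broader family $f=\frac{1}{\sqrt{c}}\cos\sqrt{E(z)}$ with $R=4cE/(E')^2$ and the proposed single-valuedness/period classification go beyond what the paper records, but the essential content --- disproof by a cosine-of-a-square-root solution --- coincides.
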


By considering the following example, Tang and Liao \cite{TL1} ensured that Conjecture A is not true in general.
\begin{exmA}\cite{TL1} Let $P_1f^2+P_2(f^{(1)})^2=P_3$, where $P_1\equiv 1$, $P_2(z)=4z$ and $P_3\equiv 1$. Clearly $P_1/P_3$ is a non-zero constant and $P_2/P_3$ is not the square of any rational function. But, $f(z)=\cos(\sqrt{z})$ is a transcendental entire solution of $P_1f^2+P_2(f^{(1)})^2=P_3$.
\end{exmA}

\smallskip
In 2007, Tang and Liao \cite{TL1} first studied the following differential equation
\bea\label{2b} f^2+P^2(f^{(k)})^2=Q,\eea
where $P$ and $Q$ are non-zero polynomials and proved that 

\begin{theoC}\cite[Theorem 1]{TL1} If the Eq. (\ref{2b}) has a transcendental meromorphic solution $f$, then $P\equiv A(\text{constant})$, $Q\equiv B(\text{constant})$, $k=2n+1$ for some non-negative integer $n$ and $f(z)=b\cos(az+c)$, where $a$, $b$ and $c$ are constants such that $Aa^k=\pm 1$ and $b^2=B$.
\end{theoC}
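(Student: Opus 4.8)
The plan is to factor the left-hand side over $\mathbb{C}$ and exploit that the product is a polynomial. Writing $\iota=\sqrt{-1}$, the equation reads
\[
\big(f+\iota P f^{(k)}\big)\big(f-\iota P f^{(k)}\big)=Q,
\]
so, setting $\phi:=f+\iota P f^{(k)}$, we have $f-\iota P f^{(k)}=Q/\phi$ and hence
\[
f=\tfrac12\Big(\phi+\tfrac{Q}{\phi}\Big),\qquad f^{(k)}=\tfrac{1}{2\iota P}\Big(\phi-\tfrac{Q}{\phi}\Big).
\]
First I would show $f$ has only finitely many poles: if $z_0$ is a pole of $f$ of order $m$ with $P(z_0)\neq 0$, then $Pf^{(k)}$ has a pole of order $m+k$ there, so $\phi$ has a pole of order $m+k$, and then $f=\frac12(\phi+Q/\phi)$ has a pole of order $m+k\neq m$ at $z_0$ — impossible. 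Hence every pole of $f$ is a zero of $P$, and there are only finitely many. Consequently $\phi$ has finitely many poles, and since $Q/\phi=f-\iota P f^{(k)}$ also has finitely many poles, $\phi$ has finitely many zeros as well; therefore $\phi=R_0e^{\gamma}$ for some rational function $R_0\not\equiv 0$ and some entire function $\gamma$.

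Next I would substitute $\phi=R_0e^{\gamma}$ (so $Q/\phi=(Q/R_0)e^{-\gamma}$) into $f^{(k)}=\frac{1}{2\iota P}(\phi-Q/\phi)$. Using $\frac{d^k}{dz^k}(R_0e^{\gamma})=L_ke^{\gamma}$ and $\frac{d^k}{dz^k}((Q/R_0)e^{-\gamma})=M_ke^{-\gamma}$ with $L_k=(\tfrac{d}{dz}+\gamma')^kR_0$ and $M_k=(\tfrac{d}{dz}-\gamma')^k(Q/R_0)$, one obtains
\[
\Big(L_k-\tfrac{R_0}{\iota P}\Big)e^{2\gamma}=-\Big(M_k+\tfrac{Q/R_0}{\iota P}\Big).
\]
Here $L_k,M_k$ are differential polynomials in $\gamma',\gamma'',\dots$ with rational coefficients whose only monomial of weighted degree $k$ (assigning $\gamma^{(j)}$ weight $j$) is $R_0(\gamma')^k$ for $L_k$ and $(-1)^k(Q/R_0)(\gamma')^k$ for $M_k$. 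If $L_k-\tfrac{R_0}{\iota P}\not\equiv 0$, then $e^{2\gamma}$ equals a quotient of such differential polynomials; by the logarithmic-derivative lemma $m(r,\gamma^{(j)})=S(r,e^{\gamma})$ for $j\geq 1$, so the right-hand side has characteristic $S(r,e^{\gamma})$, which forces $T(r,e^{2\gamma})=S(r,e^{\gamma})$ and hence $\gamma$ constant — making $\phi$, and so $f$, rational, contrary to hypothesis. Thus $L_k\equiv\tfrac{R_0}{\iota P}$ and $M_k\equiv-\tfrac{Q/R_0}{\iota P}$. Dividing the first identity by $R_0$ and isolating $(\gamma')^k$, and noting every remaining monomial involves at most $k-1$ of the $\gamma^{(j)}$, an analogous estimate gives $m(r,\gamma')=S(r,\gamma')$; since $\gamma'$ is entire it is a polynomial, so $\gamma$ is a polynomial and $f$ has finite order.

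With $\gamma$ a polynomial, $L_k$ is rational, and comparing growth as $z\to\infty$ in $L_k\equiv\tfrac{R_0}{\iota P}$ — whose dominant term on the left is $R_0(\gamma')^k$ — forces $\deg\gamma\leq 1$ and, since $1/(\iota P)$ must then be bounded, $P\equiv A$ for a non-zero constant $A$; moreover $\gamma'\equiv\lambda$ for a non-zero constant $\lambda$. Now $L_k\equiv\tfrac{R_0}{\iota A}$ becomes $(\tfrac{d}{dz}+\lambda)^kR_0=\tfrac{1}{\iota A}R_0$; a rational solution of this constant-coefficient linear ODE must be a polynomial, and inspecting the characteristic roots a nonzero polynomial solution exists only if $\lambda^k=\tfrac{1}{\iota A}$, in which case $R_0$ is constant. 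Feeding this into $M_k\equiv-\tfrac{Q/R_0}{\iota A}$, i.e. $(\tfrac{d}{dz}-\lambda)^k(Q/R_0)=-\tfrac{1}{\iota A}(Q/R_0)$, forces $(-1)^k\lambda^k=-\tfrac{1}{\iota A}$, hence $(-1)^k=-1$, so $k=2n+1$, and also $Q/R_0$ constant, so $Q\equiv B$. Finally $\phi=R_0e^{\lambda z+d_0}=C_1e^{\lambda z}$ with $C_1\neq 0$ constant, so $f=\frac12\big(C_1e^{\lambda z}+\tfrac{B}{C_1}e^{-\lambda z}\big)$; a routine rewriting with $\lambda=\iota a$ then yields $f(z)=b\cos(az+c)$ for constants $a,b,c$ with $b^2=B$ and $\iota^{k+1}Aa^k=1$, i.e. $Aa^k=\pm 1$.

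The step I expect to be the main obstacle is the pair of Nevanlinna growth estimates that pin down $\gamma$ as a polynomial: one has to bound the characteristics of $L_k$ and $M_k$ against $T(r,e^{\gamma})$ and $T(r,\gamma')$, which needs careful bookkeeping of the weighted degrees of the monomials in $\gamma',\gamma'',\dots$ combined with repeated use of the logarithmic-derivative lemma — and it is precisely here that the transcendence of $f$ (equivalently $T(r,e^{\gamma})\to\infty$) is indispensable. Once finite order is secured, the rest reduces to degree comparisons and the elementary fact that a rational solution of a constant-coefficient linear ODE must be a polynomial.
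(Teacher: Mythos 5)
Your argument is correct, and it is sound at every step (the pole-order comparison, the Hadamard-type factorization $\phi=R_0e^{\gamma}$, the Clunie/logarithmic-derivative estimates forcing $\gamma$ to be a polynomial of degree one, and the constant-coefficient ODE argument pinning down $P$, $Q$, $R_0$ and the parity of $k$). The paper does not actually prove Theorem C — it is quoted from Tang and Liao \cite{TL1} — but your method of factoring $f^2+P^2(f^{(k)})^2=\big(f+\iota Pf^{(k)}\big)\big(f-\iota Pf^{(k)}\big)$, writing the factors as rational multiples of $e^{\pm\gamma}$, differentiating $k$ times and comparing, is essentially the same strategy the authors use for their generalizations, Theorem \ref{t2.4} and Corollary \ref{c2.2}.
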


\begin{remA}\cite[Remark 1]{TL1} It follows easily from Theorem C that if either $P$ or $Q$ is a non-constant polynomial or $k$ is an even integer, then the Eq. (\ref{2b}) has no transcendental meromorphic solution.
\end{remA}

\smallskip
Secondly, Tang and Liao \cite{TL1} studied the following differential equation
\bea\label{2c} f^2+\frac{1}{P^2}(f^{(k)})^2=Q,\eea
for the existence of transcendental meromorphic solutions, where $P$ is a non-constant polynomial and $Q$ is a non-zero rational function.
In fact they got the following result.

\begin{theoD}\cite[Theorem 3]{TL1} If the Eq. (\ref{2c}) has a transcendental meromorphic solution $f$, then 
$k$ is an odd integer and $Q$ is a polynomial. Furthermore, if $k=1$, then $Q\equiv C(\text{constant})$ and the solution has the form $f (z)=A\cos (p(z))$, where $A$ is a constant such that $A^2=C$ and $p^{(1)}(z)=\pm P(z)$.
\end{theoD}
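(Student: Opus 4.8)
The plan is to factor the left side of Eq.~(\ref{2c}) over $\mathbb{C}$ and exploit the resulting product structure. Setting $g = f + \tfrac{\iota}{P}f^{(k)}$ and $h = f - \tfrac{\iota}{P}f^{(k)}$, Eq.~(\ref{2c}) is equivalent to $gh = Q$, together with $f = \tfrac12(g+h)$ and $f^{(k)} = \tfrac{P}{2\iota}(g-h)$. First I would note that $g$ and $h$ are transcendental (else $f$ would be rational), and then run a pole count: at a pole $z_0$ of $f$ of order $p\ge 1$, the term $\tfrac{\iota}{P}f^{(k)}$ has a pole there of order $\ge p+k>p$, so each of $g,h$ has a pole at $z_0$ of order exactly $p+k+s$, where $s\ge 0$ is the order of $z_0$ as a zero of $P$; hence $gh=Q$ has a pole of order $2(p+k+s)\ge 4$ at $z_0$. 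As $Q$ is rational this happens for only finitely many $z_0$, so $f$, and hence $g$ and $h$, have only finitely many poles. Invoking $gh=Q$ once more shows that $g$ and $h$ also have only finitely many zeros, so they may be written as $g = R_1 e^{\beta}$ and $h = R_3 e^{-\beta}$ with $R_1,R_3$ rational, $R_1R_3 = Q$, and $\beta$ a non-constant entire function.

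Next I would substitute these forms into the identity $\tfrac12\big(g^{(k)}+h^{(k)}\big)=f^{(k)}=\tfrac{P}{2\iota}(g-h)$. Differentiating $g=R_1e^{\beta}$ repeatedly gives $g^{(k)}=A_ke^{\beta}$ with $A_0=R_1$, $A_{j+1}=A_j'+A_j\beta'$, and likewise $h^{(k)}=B_ke^{-\beta}$ with $B_0=R_3$, $B_{j+1}=B_j'-B_j\beta'$. Separating the $e^{\beta}$ and $e^{-\beta}$ contributions --- which is legitimate since, for a non-constant entire $\beta$, the characteristic of $e^{2\beta}$ dominates $T(r,\beta)+\log r$, whereas $A_k$ and $B_k$, being differential polynomials in $\beta$ over $\mathbb{C}(z)$, have characteristic $O\big(T(r,\beta)+\log r\big)$ --- yields
\[ A_k=-\iota P R_1,\qquad B_k=\iota P R_3,\]
equivalently $g^{(k)}=-\iota P\,g$ and $h^{(k)}=\iota P\,h$. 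A pole count applied to $g^{(k)}=-\iota P g$ shows that $g$ has no poles, hence $R_1$ (and similarly $R_3$) is a polynomial and $Q=R_1R_3$ is a polynomial; moreover $g$ satisfies a linear differential equation with polynomial coefficients, so by a classical result $g$ is entire of finite order, which together with $g=R_1e^{\beta}$ forces $\beta$ to be a polynomial, say of degree $d$. I expect this to be the main obstacle: the delicate point is to exclude a priori a solution of infinite order and to pin down the decoupled equations $g^{(k)}=-\iota P g$, $h^{(k)}=\iota P h$, from which the rest follows.

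To read off the parity of $k$, I would compare top-degree terms in $A_k=-\iota PR_1$ and $B_k=\iota PR_3$. Here $A_k/R_1=(\beta')^k+(\text{terms of strictly smaller degree})$, and $B_k/R_3$ arises from $A_k/R_1$ by replacing $\beta$ with $-\beta$ (and $R_1$ with $R_3$), so its top-degree term is $(-1)^k(\beta')^k$; necessarily $d\ge 2$ (otherwise $A_k/R_1$ would be bounded near $\infty$ yet equal to the non-constant polynomial $-\iota P$) and $\deg P=k(d-1)$. Matching the two top-degree terms against $-\iota P$ and $\iota P$, which have opposite leading coefficients, forces $(-1)^k=-1$; that is, $k$ is odd. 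Together with the previous paragraph this establishes the first assertion.

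Finally, for $k=1$ the relation $A_1=-\iota PR_1$ reads $R_1'+R_1\beta'=-\iota PR_1$, i.e. $g'/g=-\iota P$; integrating gives $g=c_1e^{-\iota p}$ with a non-zero constant $c_1$ and a polynomial $p$ satisfying $p^{(1)}=P$, and symmetrically $h=c_2e^{\iota p}$. Then $Q=gh=c_1c_2=:C$ is a non-zero constant and $f=\tfrac12\big(c_1e^{-\iota p}+c_2e^{\iota p}\big)$. Choosing $A$ with $A^2=C$ and absorbing the remaining phase into the antiderivative $p$ (using that $\cos$ is even, so that the sign in $p^{(1)}=\pm P$ is immaterial) puts this in the form $f(z)=A\cos(p(z))$ with $A^2=C$ and $p^{(1)}=\pm P$, as asserted.
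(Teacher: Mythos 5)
Your argument is correct; note that the paper itself only quotes Theorem D from Tang--Liao without proof, but your route is essentially the same one the paper uses to prove its generalization, Theorem \ref{t2.4} (and Corollary \ref{c2.2} with $R=1/P$): factor the sum of squares as $\big(Rf^{(k)}+\iota f\big)\big(Rf^{(k)}-\iota f\big)=Q$, write the two factors as rational functions times exponentials of entire functions, differentiate $k$ times, separate the exponential terms via a small-function (logarithmic-derivative) argument, and compare degrees and leading coefficients to force $Q$ polynomial and $(-1)^k=-1$. The only points worth polishing are that the separation step should be justified through $T(r,\beta')=S(r,e^{\beta})$ (since your coefficients $A_k,B_k$ are differential polynomials in $\beta'$, not in $\beta$), and that the finite-order conclusion for $g$ rests on the classical Wittich-type theorem for linear ODEs with polynomial coefficients, both of which you correctly anticipate.
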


\smallskip
In 2019, Han and L\"{u} \cite{HL1} considered the following differential equation
\bea\label{2e}f^n(z)+(f^{(1)}(z))^n=e^{\alpha z+\beta},\eea
where $n(\geq 1)$ is an integer, $\alpha$ and $\beta$ are constants and derived the following result.

\begin{theoE}\cite[Theorem 2.1]{HL1} The meromorphic solutions $f$ of the Eq. (\ref{2e}) must be entire functions and the following assertions hold.
\begin{enumerate}
\item[(A)] For $n=1$, the general solutions of (\ref{2e}) are $f(z)=\frac{e^{\alpha z + \beta}}{\alpha + 1} + a e^{-z}$ for $\alpha \neq -1$, and $f(z) = z e^{-(z + \beta)} + a e^{-z}$.
\item[(B)] For $n=2$, either $\alpha = 0$ and the general solutions of (\ref{2e}) are $f(z) = e^{\frac{\beta}{2}} \sin(z + b)$, or $f(z) = d e^{\frac{\alpha z + \beta}{2}}$.
\item[(C)] For $n \geq 3$, the general solutions of (\ref{2e}) are $f(z) = d e^{\frac{\alpha z + \beta}{n}}$.
\end{enumerate}

Here $a, b, d \in \mathbb{C}$ with $d^n \left(1 + \frac{\alpha}{n}\right)^n = 1$ for $n \geq 1$.
\end{theoE}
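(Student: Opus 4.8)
The plan is in three stages: first promote meromorphic solutions to entire ones, then normalise $(\ref{2e})$ into a Fermat-type functional equation, and finally analyse the ranges $n\ge 3$, $n=2$ and $n=1$ separately. For the first stage, suppose $f$ had a pole of order $p\ge 1$ at some point $z_0$; then $f^n$ has a pole of order $np$ there while $(f^{(1)})^n$ has a pole of order $n(p+1)$, and since $n(p+1)>np$ the left side of $(\ref{2e})$ has a pole of exact order $n(p+1)$ at $z_0$, contradicting the entirety of $e^{\alpha z+\beta}$. Hence $f$ is entire. For the second stage, set $h(z)=e^{(\alpha z+\beta)/n}$, which is entire, zero-free, and satisfies $h^{(1)}=\tfrac{\alpha}{n}h$ and $h^n=e^{\alpha z+\beta}$; writing $f=uh$ (so $u=f/h$ is entire, $h$ being zero-free) gives $f^{(1)}=\big(u^{(1)}+\tfrac{\alpha}{n}u\big)h$, and dividing $(\ref{2e})$ by $h^n$ yields $u^n+\big(u^{(1)}+\tfrac{\alpha}{n}u\big)^n=1$. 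Putting $v:=u^{(1)}+\tfrac{\alpha}{n}u$, we have a pair of \emph{entire} functions with $u^n+v^n=1$ together with the side relation linking $v$ to $u$.

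The cases $n\ge 3$ and $n=1$ are then short. For $n\ge 3$, Theorem~4 of \cite{FG2} (see also Montel \cite{M1}) says $x^n+y^n=1$ admits no pair of entire solutions with a non-constant member, so $u\equiv d$ is constant; then $v=\tfrac{\alpha}{n}d$ and $u^n+v^n=1$ forces $d^n+(\alpha d/n)^n=1$, and unwinding gives $f=d\,e^{(\alpha z+\beta)/n}$, which is (C). For $n=1$, $(\ref{2e})$ is the linear equation $f^{(1)}+f=e^{\alpha z+\beta}$; the integrating factor $e^{z}$ followed by one integration gives $f=\tfrac{1}{\alpha+1}e^{\alpha z+\beta}+a e^{-z}$ if $\alpha\ne -1$ and $f=e^{\beta}z e^{-z}+a e^{-z}$ if $\alpha=-1$, which is (A).

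The substance is the case $n=2$. From $u^2+v^2=1$ with $u,v$ entire, both $u+iv$ and $u-iv$ are zero-free, so $u+iv=e^{g}$ and $u-iv=e^{-g}$ for some entire $g$; hence $u=\tfrac12(e^g+e^{-g})$ and $v=\tfrac1{2i}(e^g-e^{-g})$. Substituting these into $v=u^{(1)}+\tfrac{\alpha}{2}u$ and clearing $e^{-g}$ produces, after a short computation, the identity $e^{2g}\big(2g^{(1)}+2i+\alpha\big)=2g^{(1)}+2i-\alpha$, valid on all of $\mathbb{C}$. If $\alpha\ne 0$ this shows $e^{2g}$ can never equal $1$ (that would force $2\alpha=0$); being entire and zero-free, $e^{2g}$ then omits both $0$ and $1$, so by Picard's theorem it is constant, whence $u$ and $v$ are constants and $f=d\,e^{(\alpha z+\beta)/2}$ with $d^2+(\alpha d/2)^2=1$. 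If $\alpha=0$ the identity factors as $\big(2g^{(1)}+2i\big)\big(e^{2g}-1\big)\equiv 0$: either $e^{2g}\equiv 1$, giving the constant solution $f=d\,e^{\beta/2}$, or $g^{(1)}\equiv -i$, so $g=-iz+c$ and $u=\tfrac12(e^{c}e^{-iz}+e^{-c}e^{iz})$, a combination of $e^{\pm iz}$ that can be rewritten as $\sin(z+b)$, whence $f=e^{\beta/2}\sin(z+b)$. This is (B).

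I expect the main obstacle to be precisely the $n=2$, $\alpha\ne 0$ sub-case: after the Fermat reduction one must rule out all trigonometric-type solutions and retain only the pure exponential, and the efficient route is the ``omits two values'' observation above rather than a direct value-distribution estimate. A last routine step is to verify by direct substitution in each branch that the exponential coefficient indeed satisfies the normalisation recorded in the theorem.
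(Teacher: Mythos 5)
Your argument is essentially correct and complete, but there is nothing in the paper to compare it against directly: Theorem E is quoted from Han and L\"u \cite{HL1} as background and is never proved here. The nearest internal analogue is Theorem \ref{t2.3} (specialised to $k=1$, $R\equiv 1$, $\alpha(z)=az+b$), which covers parts (B) and (C) but not (A), since it assumes $m\geq 2$. Measured against that proof, your route differs precisely where the real work lies, namely the case $n=2$. The paper first rules out infinite-order solutions (Theorem \ref{t2.2}), so that in the Gross parametrisation $e^{-\alpha/2}f=\sin h$, $e^{-\alpha/2}f^{(1)}=\cos h$ the function $h$ is a polynomial, and then eliminates the unwanted branch by a Mohon'ko/Borel-type characteristic-function comparison applied to $e^{2\iota h}$. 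You instead extract the exact identity $e^{2g}\bigl(2g^{(1)}+2i+\alpha\bigr)=2g^{(1)}+2i-\alpha$ and observe that for $\alpha\neq 0$ the entire function $e^{2g}$ omits both $0$ and $1$, hence is constant by Picard, while for $\alpha=0$ the identity factors over the integral domain of entire functions; this bypasses the finite-order reduction altogether and is the more elementary argument. The pole-order count for entirety, the integrating factor for $n=1$, and the appeal to Gross/Montel for $n\geq 3$ are all sound. A useful by-product of your computation: the normalisation in the quoted statement should read $d^n\bigl(1+(\alpha/n)^n\bigr)=1$ rather than $d^n\bigl(1+\tfrac{\alpha}{n}\bigr)^n=1$ (the two agree only for $n=1$), and the $\alpha=-1$ solution in (A) should be $ze^{-z+\beta}+ae^{-z}$, since $ze^{-(z+\beta)}+ae^{-z}$ yields $f+f^{(1)}=e^{-z-\beta}$ rather than $e^{-z+\beta}$.
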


\smallskip
In this paper, we assume that the reader is familiar with standard symbols and fundamental results of Nevanlinna Theory (see \cite{WKH,YY1}). Throughout the paper, we denote by $\mu(f)$ and $\rho(f)$ the lower order of $f$ and the order of $f$ respectively. 
As usual, the abbreviation CM means ``counting multiplicities'', while IM means ``ignoring multiplicities''. Let $f$ and $g$ be two non-constant meromorphic functions and $a\in\mathbb{C}$. If $g-a=0$ whenever $f-a=0$, we write $f=a\Rightarrow g=a$.

\medskip
Let $R=\frac{P}{Q}(\not\equiv 0)$ be a rational function, where $P$ and $Q$ are relatively prime polynomials. We define the degree of $R$ as $\deg(R) = \deg(P)-\deg(Q)$. If  $R \equiv 0$, then we define $\deg(R) = -\infty$. If $R_{1}$ and $R_{2}$ are two rational functions, then 
\begin{enumerate}
\item[(i)] $\deg(R_{1}R_{2})=\deg(R_{1})+ \deg(R_{2})$,
\item[(ii)] $\deg\left(\frac{R_{1}}{R_{2}}\right)=\deg(R_{1})-\deg(R_{2})$,
\item[(iii)] $\deg(R^{(1)})\leq \deg(R)-1$,
\item[(iv)] $\deg\left(\frac{R^{(1)}}{R}\right)\leq - 1$,
\item[(v)] $\deg (R_{1} + R_{2}) \leq \max \{\deg(R_{1}), \deg(R_{2})\}$.
\end{enumerate}

\section{\bf{Main results}}

In this section, we consider the following more general differential equation
\bea\label{2.c} f^m+\big(Rf^{(k)}\big)^n=Qe^{\alpha},\eea
where $k$, $m$ and $n$ are three positive integers, $Q$ and $R$ are non-zero rational functions and $\alpha$ is a polynomial.

\smallskip
It is easy to verify that the Eq. (\ref{2.c}) has no rational solution provided that $\alpha$ is non-constant.  If $m=n=1$, it is easy to verify that $f(z)=e^{z^2}$ is a solution of $f(z)+R(z)f^{(k)}(z)=Q(z)e^{\alpha(z)}$, where $k=2$, $R(z)=z$, $Q(z)=1+2z+4z^3$ and $\alpha(z)=z^2$. Therefore we study the Eq. (\ref{2.c}) for the existence of solutions for the case when $m+n>2$.

\smallskip
The following question is natural to pose:
\begin{ques}\label{ques1} What can be concluded about the existence of non-constant meromorphic solution $f$ of the Eq. (\ref{2.c}) when $m+n>2$?
\end{ques}

Our first objective is to provide a complete solution to Question \ref{ques1}. But, here we give a partial answer of Question \ref{ques1} under some conditions.
Now we state our first result.
 
\begin{theo}\label{t2.1} Let $R(\not\equiv 0)$ and $Q(\not\equiv 0)$ be a rational functions, $\alpha$ be polynomials and let $k$, $m$ and $n$ be three positive integers such that $m\neq n$ and $m+n>2$. Let $f$ be any transcendental meromorphic function such that one of the following holds:
\begin{enumerate}
\item[(i)] $\rho(f)=+\infty$;
\item[(ii)] $\rho(f)\leq +\infty$, when $\deg(\alpha)=0$;
\item[(iii)] $\deg(\alpha)<\mu(f)$, when $\deg(\alpha)>0$.
\end{enumerate}

Then $f$ can not be a solution of the Eq. (\ref{2.c}),
\begin{enumerate}
\item[(A)] if either $m>n>1$ or $n>m>1$;
\item[(B)] if either $m>n=1$ or $n>m=1$ and $n\geq k+1$.
\end{enumerate}
\end{theo}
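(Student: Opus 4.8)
The plan is to argue by contradiction: suppose $f$ is a transcendental meromorphic solution of \eqref{2.c} of the relevant growth type. The central idea is a Nevanlinna-theoretic comparison of orders of the three terms $f^m$, $(Rf^{(k)})^n$ and $Qe^\alpha$, combined with a careful analysis of the zeros and poles of $f$. Since $R$ and $Q$ are rational, we have $T(r,R)=T(r,Q)=S(r,f)$ (they are small functions). First I would observe that $\rho(e^\alpha)=\deg\alpha$, so in case (ii) the right-hand side has order $0$, in case (iii) it has order $\deg\alpha<\mu(f)$, and in case (i) the left side can have arbitrarily large order — in all three settings the growth of the left-hand side will be shown to be governed by $\rho(f)$ or $\mu(f)$ in a way incompatible with the right-hand side unless a rigid algebraic relation is forced, and that relation will then be ruled out by the hypothesis $m\neq n$ together with the range conditions in (A) and (B).

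For part (A), where $\min\{m,n\}\geq 2$, the key step is to show that $f$ must in fact be entire with at most finitely many zeros, hence of the form $f = P_1 e^{\beta}$ with $P_1$ rational and $\beta$ a polynomial (a standard Hadamard/Weierstrass factorization argument once we bound the counting functions $N(r,f)$ and $N(r,1/f)$). The pole-counting comes from comparing the orders of poles on both sides of \eqref{2.c}: a pole of $f$ of order $p$ not among the poles of $R,Q$ would force $mp = n(p+k) + (\text{contribution from }R)$, which one checks is impossible for suitable $m,n$, or else confines poles to a finite set. Once $f = P_1 e^{\beta}$, substituting into \eqref{2.c} gives an identity among exponentials $e^{m\beta}$, $e^{n\beta}$ and $e^{\alpha}$ with rational-function coefficients; since $m\neq n$, the exponents $m\beta$, $n\beta$ are genuinely distinct (as $\beta$ is non-constant — if $\beta$ were constant then $f$ is rational, excluded), and a Borel-type / linear-independence argument on exponentials forces the coefficient of one of the three terms to vanish, a contradiction. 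The growth hypotheses (i)–(iii) are exactly what is needed to guarantee $\beta$ is a polynomial and to run the Borel argument (in case (iii) the inequality $\deg\alpha<\mu(f)=\deg\beta$ prevents $\alpha$ from coinciding with $m\beta$ or $n\beta$).

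For part (B), say $m>n=1$ with $n\geq k+1$ — note $n=1$ forces $k\leq 0$, which is impossible since $k\geq 1$; so the live case is $n>m=1$ with $n\geq k+1$. Here \eqref{2.c} reads $f + R^n (f^{(k)})^n = Q e^\alpha$. The plan is again to bound poles and zeros of $f$: from the equation, $f^{(k)}$ and $f$ are tied together, and a logarithmic-derivative estimate gives $n\,m(r, f^{(k)}/f) = S(r,f)$, so $m(r,f) \le m(r, Qe^\alpha) + m(r, R^n(f^{(k)})^n) + O(1)$, which under the growth restriction pins $T(r,f)$ to $T(r,e^\alpha)+S(r,f)$. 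Then, as before, conclude $f = P_1 e^\beta$, substitute, and exploit $n\geq k+1$: the term $R^n(f^{(k)})^n$ has the shape $R^n(\text{polynomial in }\beta',\dots)^n e^{n\beta}$ of "$e^{n\beta}$-type", while $f$ is "$e^{\beta}$-type" and $Qe^\alpha$ is "$e^\alpha$-type"; since $n\geq k+1\geq 2$ we have $n\neq 1=m$, the three exponents cannot all collapse, and Borel's theorem again yields a contradiction — unless $\beta$ is constant, returning us to the rational case already excluded at the outset.

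**The hard part will be** the pole/zero bookkeeping that upgrades a general meromorphic solution to the normal form $f = P_1 e^\beta$: one must simultaneously control $N(r,1/f)$ (zeros of $f$) and $N(r,f)$ (poles of $f$) using only the equation and the smallness of $R,Q$, being careful at the finitely many zeros/poles of $R$ and $Q$ where the naive local order comparison fails, and being careful that in case (i), where $\rho(f)=\infty$, one cannot invoke finite-order factorization directly but must instead show a priori that $f$ has finite order (or derive the contradiction purely from comparing $\mu$ and $\rho$ of the two sides). I expect the matching of pole orders — solving $mp = n(p+k)$ type relations and tracking the rational corrections — to be the technically delicate point, and I would organize it as a separate lemma before entering the main case analysis.
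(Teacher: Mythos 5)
Your plan has a genuine gap at its foundation, for both parts. The entire strategy rests on upgrading $f$ to the normal form $f=P_1e^{\beta}$ with $P_1$ rational, which requires showing that $f$ has only finitely many zeros. But the equation $f^m+(Rf^{(k)})^n=Qe^{\alpha}$ imposes no local constraint at a zero of $f$: if $f(z_0)=0$ the equation merely says $(Rf^{(k)})^n(z_0)=Q(z_0)e^{\alpha(z_0)}\neq 0$, which limits neither the multiplicity nor the number of such zeros. (Your ``hard part'' paragraph worries about pole bookkeeping, but the poles are the easy half; it is $N(r,1/f)$ that cannot be bounded by local order comparison.) The paper never establishes such a normal form for part (A). Instead it sets $F=e^{-\alpha/m}f$, $G=Re^{-\alpha/n}f^{(k)}$, so that $F^m+G^n=Q$, and applies the second fundamental theorem to the auxiliary function $h=(F^m-Q)/F^m=-G^n/F^m$: the three counting functions $\ol N(r,h)$, $\ol N(r,0;h)$, $\ol N(r,1;h)$ are controlled by $\ol N(r,0;F)$, $\ol N(r,0;G)$ and $S(r,F)$ respectively, and combining with Mohon'ko's theorem yields $\frac1m+\frac1n\geq 1$, which kills the case $\min\{m,n\}\geq 2$, $m\neq n$ outright. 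Zeros of $F$ enter only through a reduced counting function, which is exactly why this works where ``finitely many zeros'' is unobtainable. Your Borel argument would be fine \emph{if} the normal form held, but you have no route to it, so part (A) is not proved.

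For part (B) you have misread the hypothesis and dropped a case. The intended parsing is ``($m>n=1$) or ($n>m=1$ and $n\geq k+1$)'': the constraint $n\geq k+1$ attaches only to the second alternative, and the paper treats $m>n=1$ as a substantive case, disposed of by $mT(r,f)=T(r,Rf^{(k)}-Qe^{\alpha})+S(r,f)\leq m(r,f^{(k)}/f)+m(r,f)+S(r,f)\leq T(r,f)+S(r,f)$, contradicting $m\geq 2$. Your proposal proves nothing for this case. For $n>m=1$ with $n\geq k+1$ your sketch again leans on the unestablished form $f=P_1e^{\beta}$ plus Borel; the paper's actual argument is much more involved and cannot be shortcut this way: it differentiates $f+(Rf^{(k)})^n=Qe^{\alpha}$ to show every generic zero of $f-Qe^{\alpha}$ has multiplicity at least $n\geq k+1$, splits on whether $(Qe^{\alpha})^{(k)}\equiv 0$, and in the hard sub-case (where $g=f-Qe^{\alpha}$ provably has \emph{infinitely} many zeros, so no such normal form exists) it forces $n=k+1$, writes $g=h^{k+1}$, and derives a contradiction via Wiman--Valiron asymptotics and a Clunie/Tumura--Clunie-type analysis with an auxiliary small function. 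None of that machinery is anticipated by your outline, so part (B) is also essentially unproved as written.
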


When $\alpha$ is a constant, we obtain the following corollary from Theorem \ref{t2.1}.

\begin{cor}\label{c2.1} Let $R(\not\equiv 0)$ and $Q(\not\equiv 0)$ be a rational functions and let $k$, $m$ and $n$ be three positive integers such that $m\neq n$ and $m+n>2$. If the following Eq.
\bea\label{2.ca} f^m+\big(Rf^{(k)}\big)^n=Q,\eea
satisfies one of the following conditions:
\begin{enumerate}
\item[(i)] $m>n>1$ or $n>m>1$;
\item[(ii)] $m>n=1$ or $n>m=1$ and $n\geq k+1$,
\end{enumerate}
then the Eq. (\ref{2.ca}) does not have any transcendental meromorphic solution.
\end{cor}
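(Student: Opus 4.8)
The plan is to obtain Corollary~\ref{c2.1} as an immediate specialization of Theorem~\ref{t2.1}, so that no new analysis is required. The key observation is that Eq.~(\ref{2.ca}) is nothing but Eq.~(\ref{2.c}) with a constant exponent $\alpha$. Concretely, suppose for contradiction that $f$ is a transcendental meromorphic solution of
\[
f^m+\big(Rf^{(k)}\big)^n=Q .
\]
Fix any non-zero constant, say the constant polynomial $\alpha\equiv 1$, and put $\tilde Q:=e^{-1}Q$. Since $Q$ is a non-zero rational function, so is $\tilde Q$, and $f$ then satisfies $f^m+\big(Rf^{(k)}\big)^n=\tilde Q\,e^{\alpha}$, i.e.\ exactly Eq.~(\ref{2.c}) with the rational function $\tilde Q$ in place of $Q$ and with $\deg(\alpha)=0$.

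Next I would verify that the remaining hypotheses of Theorem~\ref{t2.1} hold. The conditions $m\neq n$ and $m+n>2$ are assumed in the corollary. Because $\deg(\alpha)=0$, clause (ii) of Theorem~\ref{t2.1} is the relevant one, and its hypothesis ``$\rho(f)\le+\infty$'' is vacuous, since it holds for every meromorphic function; hence there is no restriction whatsoever on the growth of $f$. Finally, condition (i) of the corollary is word-for-word condition (A) of Theorem~\ref{t2.1}, and condition (ii) of the corollary is condition (B). Therefore Theorem~\ref{t2.1} applies and asserts that $f$ cannot be a solution of Eq.~(\ref{2.c}), contradicting the choice of $f$. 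Consequently Eq.~(\ref{2.ca}) has no transcendental meromorphic solution under the stated hypotheses.

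There is essentially no obstacle here. The only care needed is the trivial bookkeeping of absorbing the nonzero constant $e^{\alpha}$ into the coefficient (keeping it a non-zero rational function), together with the remark that the growth hypothesis in clause (ii) of Theorem~\ref{t2.1} is automatically satisfied, so that the lower-order/order dichotomy which makes Theorem~\ref{t2.1} delicate collapses in this special case. All the genuine content — the Nevanlinna-theoretic estimates separating the regimes $m>n>1$, $n>m>1$, $m>n=1$, and $n>m=1$ with $n\ge k+1$ — has already been carried out in the proof of Theorem~\ref{t2.1}.
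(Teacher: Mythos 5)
Your proposal is correct and matches the paper exactly: the paper derives Corollary~\ref{c2.1} from Theorem~\ref{t2.1} by taking $\alpha$ constant, under which clause (ii) of the theorem's growth hypotheses is automatically satisfied for every transcendental meromorphic $f$, so the conclusion holds without restriction. (One could even take $\alpha\equiv 0$ so that $e^{\alpha}=1$ and no rescaling of $Q$ is needed, but your bookkeeping with $\tilde Q=e^{-1}Q$ is equally valid.)
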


If $m=n$, then the Eq. (\ref{2.c}) reduces to
\bea\label{2.d} f^m+\big(Rf^{(k)}\big)^m=Qe^{\alpha},\eea
where $k(\geq 1)$ and $m(\geq 2)$ are two integers, $Q$ and $R$ are non-zero rational functions and $\alpha$ is a polynomial.
We now state our next result.

\begin{theo}\label{t2.2} Let $Q(\not\equiv 0)$ and $R(\not\equiv 0)$ be rational functions, $\alpha$ be a polynomial and let $k(\geq 1)$ and $m(\geq 2)$ be integers. Then the Eq. (\ref{2.d}) has no meromorphic solution of infinite order.
\end{theo}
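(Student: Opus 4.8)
The plan is to argue by contradiction. Suppose (\ref{2.d}) has a meromorphic solution $f$ with $\rho(f)=+\infty$; then $f$ is transcendental, so $f^{(k)}\not\equiv0$ and $Rf^{(k)}\not\equiv0$. Since $m\ge2$, I would factor $x^m+y^m=\prod_{j=0}^{m-1}(x-\omega_j y)$, where $\omega_0,\ldots,\omega_{m-1}$ are the $m$ distinct $m$-th roots of $-1$ (all non-zero), and set $g_j:=f-\omega_j Rf^{(k)}$, so that $\prod_{j=0}^{m-1}g_j=Qe^{\alpha}$. Because $Rf^{(k)}\not\equiv0$ and the $\omega_j$ are distinct, the $g_j$ are pairwise distinct, and none is $\equiv0$ (else the product would vanish).

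The first analytic input is a pole count. At a pole $z_0$ of $f$ of order $p\ge1$ which is neither a zero nor a pole of $R$ and not a pole of $Q$, the term $Rf^{(k)}$ has a pole of order $p+k>p$, so each $g_j$ has a pole of exact order $p+k$ there; hence $\prod_j g_j$ has a pole of order $m(p+k)>0$ at $z_0$, impossible since $Qe^{\alpha}$ is holomorphic and non-zero there. Thus $f$, and therefore each $g_j$, has only finitely many poles. A companion argument using $g_i-g_j=(\omega_j-\omega_i)Rf^{(k)}$ shows the zeros of each $g_j$ lie among the finitely many zeros of $Q$, poles of $f$, and poles of $R$. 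Consequently each $g_j$ has finitely many zeros and poles, so $g_j=T_je^{\beta_j}$ with $T_j$ a non-zero rational function and $\beta_j$ entire. Substituting into $\prod_jT_j\,e^{\sum_j\beta_j}=Qe^{\alpha}$ and using that $e^{\beta}$ has no zeros or poles forces $Q/\prod_jT_j$ to be a non-zero constant and $\sum_{j=0}^{m-1}\beta_j=\alpha+c$ for some constant $c$.

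Next, pick two of the factors, say $g_0,g_1$ (possible since $m\ge2$), and solve the linear system to get $f=A_0e^{\beta_0}+A_1e^{\beta_1}$ and $Rf^{(k)}=\frac{1}{\omega_1-\omega_0}\big(T_0e^{\beta_0}-T_1e^{\beta_1}\big)$ with $A_0,A_1$ non-zero rational. Writing $(A_ie^{\beta_i})^{(k)}=P_ie^{\beta_i}$, i.e. $P_i=A_i\frac{(A_ie^{\beta_i})^{(k)}}{A_ie^{\beta_i}}$, and equating the two expressions for $f^{(k)}$ gives $H_0e^{\beta_0}+H_1e^{\beta_1}=0$, where $H_i$ equals $P_i$ plus a rational function. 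Since $u_i:=A_ie^{\beta_i}$ has finitely many zeros and poles, the lemma on the logarithmic derivative gives $m\big(r,\tfrac{u_i^{(k)}}{u_i}\big)=O(\log^+T(r,e^{\beta_i})+\log r)$ outside a set of finite measure, and as $u_i$ contributes essentially no poles to $u_i^{(k)}/u_i$ one gets $T(r,H_i)=O(\log^+T(r,e^{\beta_i})+\log r)$. Now $H_0e^{\beta_0}=-H_1e^{\beta_1}$ together with $\beta_0+\beta_1=\alpha+c$ yields $e^{2\beta_0}=-\tfrac{H_1}{H_0}e^{\alpha+c}$, whence (using $T(r,e^{2\beta_0})=2T(r,e^{\beta_0})$ and $T(r,e^{\beta_1})\le T(r,e^{\beta_0})+O(r^{\deg\alpha})$)
\[2T(r,e^{\beta_0})\le O\big(\log^+T(r,e^{\beta_0})+\log r\big)+O\big(r^{\deg\alpha}\big)\]
outside a set of finite measure. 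Absorbing the exceptional set by monotonicity of $T$ forces $T(r,e^{\beta_0})=O(r^{\deg\alpha})$, so $e^{\beta_0}$ has finite order and hence $\beta_0$ (and then $\beta_1=\alpha+c-\beta_0$) is a polynomial. But then $f=A_0e^{\beta_0}+A_1e^{\beta_1}$ has finite order, contradicting $\rho(f)=+\infty$.

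The remaining case is the degenerate one $H_0\equiv0$, in which $H_1\equiv0$, $P_0$ is rational, and $u_0=A_0e^{\beta_0}$ satisfies the linear equation $u_0^{(k)}=C_0u_0$ with $C_0$ rational; since every meromorphic solution of a linear homogeneous differential equation with rational coefficients is of finite order, $e^{\beta_0}$ (and likewise $e^{\beta_1}$) has finite order, giving the same contradiction. I expect the delicate point to be the growth-comparison step: one must verify that the ``remainder'' functions $H_0,H_1$ genuinely have Nevanlinna characteristic $O(\log^+T(r,e^{\beta_i})+\log r)$ — this relies crucially on $u_i=A_ie^{\beta_i}$ having only finitely many zeros and poles, so that the logarithmic derivative $u_i^{(k)}/u_i$ has negligible counting function — and that the resulting self-improving estimate for $T(r,e^{\beta_0})$, once freed of its exceptional set, really does cap the order of $e^{\beta_0}$ by $\deg\alpha$ and thereby forces $\beta_0$ to be a polynomial.
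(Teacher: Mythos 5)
Your argument is sound for $m=2$, but it contains a genuine gap for $m\ge 3$, so as written it does not prove the theorem. The culprit is the identity $\beta_0+\beta_1=\alpha+c$: what your factorization actually yields is $\sum_{j=0}^{m-1}\beta_j=\alpha+c$, and this coincides with what you use only when $m=2$. For $m\ge 3$ the two-factor relation $H_0e^{\beta_0}+H_1e^{\beta_1}=0$ gives only $e^{\beta_0-\beta_1}=-H_1/H_0$, i.e.\ slow growth of the \emph{difference} $\beta_0-\beta_1$; that cannot by itself bound $T(r,e^{\beta_0})$ (take $\beta_1=\beta_0$ arbitrary: the difference carries no information about the individual growth). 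Hence the self-improving inequality $2T(r,e^{\beta_0})\le O(\log^+T(r,e^{\beta_0})+\log r)+O(r^{\deg\alpha})$ is unjustified for $m\ge3$. Two repairs are available: (a) run your pairwise argument for every pair $(0,j)$, $j=1,\dots,m-1$, and combine the resulting bounds on $T(r,e^{\beta_0-\beta_j})$ with $e^{m\beta_0}=e^{\alpha+c}\prod_{j\ne 0}e^{\beta_0-\beta_j}$ to recover $mT(r,e^{\beta_0})\le O(r^{\deg\alpha})+O(\log^+ T+\log r)$; or (b) do what the paper does and eliminate $m\ge3$ at the outset: its Lemma (a second-main-theorem argument applied to $h=(F^m-Q)/F^m$ with $F=e^{-\alpha/m}f$, $G=Re^{-\alpha/m}f^{(k)}$) shows that a transcendental solution with $F$ transcendental forces $\frac1m+\frac1m\ge1$, and $\rho(f)=+\infty$ makes $F$ automatically transcendental, so only $m=2$ survives.

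For $m=2$ your route is genuinely different from the paper's and works. The paper writes $Rf^{(k)}\pm\iota f=Q_{1,2}e^{P_{1,2}}$ with $P_1+P_2=\alpha$, differentiates, and reaches a contradiction by showing $T(r,e^{2P_1-\alpha})=S(r,e^{2P_1-\alpha})$ via Mohon'ko's lemma, disposing of the degenerate vanishing coefficient with Clunie's lemma; you instead extract the explicit bound $T(r,e^{\beta_0})=O(r^{\deg\alpha})$ from the logarithmic-derivative estimate, which is quantitatively a bit stronger. Your remaining soft spot is the degenerate case: the assertion that every meromorphic solution of $u^{(k)}=C_0u$ with $C_0$ rational has finite order is true but is a nontrivial classical theorem that needs a citation; alternatively, write $u_0^{(k)}/u_0$ as a degree-$k$ differential polynomial in $g=\beta_0'+A_0'/A_0$ with leading term $g^k$ (the paper's Lemma on $F^{(n)}/F$), apply Clunie's lemma to $g^{k-1}\cdot g=C_0/A_0-(\text{lower-degree terms})$ to get $T(r,g)=S(r,g)$, and conclude that $g$ is rational and $\beta_0$ a polynomial.
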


According to Theorem \ref{t2.2}, every solutions of the Eq. (\ref{2.d}) must be of finite order.

\smallskip
Our second objective is to explore all possible characterizations of solutions to the Fermat-type functional equation (\ref{2.c}) depending on the nature of $m(\geq 2)$, $R$, $Q$ and $\alpha$.

\medskip
When $R(\not\equiv 0)$ is a polynomial and $Q\equiv 1$, we obtain the following result which includes complete list of solutions of the Eq. (\ref{2.d}) for different values of $m(\geq 2)$.
 
\begin{theo}\label{t2.3} Let $R(\not\equiv 0)$ and $\alpha$ be polynomials and let $k(\geq 1)$ and $m(\geq 2)$ be two integers. 
The meromorphic solutions $f$ of the following Eq. 
\bea\label{2.e} f^m+\big(Rf^{(k)}\big)^m=e^{\alpha},\eea
must be entire functions, $R$ reduces to a non-zero constant and the following assertions hold.
\begin{enumerate}
\item[(A)] For $m=2$, the general solutions of (\ref{2.e}) are one of the following:
\begin{enumerate}
\item[(A1)] $f(z)=\frac{d^2-1}{2\iota d}e^{\frac{az+b}{2}}$, where $a(\neq 0), b, d(\neq 0)$ are constants such that $d^2(R(a/2)^k-\iota)=R(a/2)^k+\iota$;

\medskip
\item[(A2)] $f(z)=e^{\frac{a_1z+a_2}{2}}\sin (b_1z+b_2)$, where $a_1,a_2, b_1(\neq 0),b_2$ are constants such that $R\left(\frac{a_1}{2}+\iota b_1\right)^k=\iota$ and $R\left(\frac{a_1}{2}-\iota b_1\right)^k=-\iota$. Furthermore if $a_1=0$, then $k$ is odd;
\end{enumerate}

\medskip
\item[(B)] For $m\geq 3$, the general solutions of (\ref{2.e}) are $f(z)=ce^{\frac{az+b}{m}}$, where $a(\neq 0), b, c(\neq 0)$ are constants such that $c^m\left(R^m(a/m)^{km}+1\right)=1$.
\end{enumerate}
\end{theo}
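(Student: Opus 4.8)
The plan is to reduce to the entire, finite-order case and then to factor $X^{m}+Y^{m}$ into linear factors over $\mathbb{C}$, exploiting that $e^{\alpha}$ has no zeros. By Theorem~\ref{t2.2} every meromorphic solution $f$ of (\ref{2.e}) has finite order, (\ref{2.e}) being the case $Q\equiv 1$ of (\ref{2.d}). Next I would show that $f$ is entire: if $f$ had a pole at $z_{0}$ of order $p\geq 1$, then, writing $r:=\operatorname{ord}_{z_{0}}(R)\geq 0$ (so $r>0$ only at the finitely many zeros of $R$), $f^{m}$ has pole order $mp$ there while $(Rf^{(k)})^{m}$ has pole order $m(p+k-r)$ when $p+k>r$; unless $r=k$ these differ and the larger pole survives in the left-hand side of (\ref{2.e}), contradicting that $e^{\alpha}$ is zero- and pole-free. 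Thus a pole of $f$ could lie only at a zero of $R$ of order exactly $k$, and a local Laurent-coefficient analysis there --- comparing $f^{m}$ with $(Rf^{(k)})^{m}$ term by term, in the spirit of \cite{TL1,HL1} --- rules that out as well. So $f$ is entire, as in Theorems~C and~E.

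With $\zeta_{1},\dots,\zeta_{m}$ the $m$-th roots of $-1$, the identity $X^{m}+Y^{m}=\prod_{j=1}^{m}(X-\zeta_{j}Y)$ turns (\ref{2.e}) into
\[\prod_{j=1}^{m}\bigl(f-\zeta_{j}Rf^{(k)}\bigr)=e^{\alpha}.\]
Each factor $g_{j}:=f-\zeta_{j}Rf^{(k)}$ is entire of finite order ($R$ being a polynomial), and since their product $e^{\alpha}$ is zero-free, each $g_{j}$ is zero-free, so $g_{j}=e^{q_{j}}$ for a polynomial $q_{j}$ with $\sum_{j}q_{j}=\alpha$. Subtracting the identities $f-\zeta_{j}Rf^{(k)}=e^{q_{j}}$ for $j\neq l$ gives $Rf^{(k)}=(e^{q_{j}}-e^{q_{l}})/(\zeta_{l}-\zeta_{j})$, a quantity independent of the pair.

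For $m\geq 3$, equating this expression for the pairs $(1,2)$ and $(1,3)$ gives $c_{1}e^{q_{1}}+c_{2}e^{q_{2}}+c_{3}e^{q_{3}}=0$ with nonzero constants $c_{i}$; by Borel's theorem some $q_{i}-q_{j}$ is constant, and then the relation collapses to a two-term one forcing a further coincidence, so $q_{1},q_{2},q_{3}$ pairwise differ by constants, whence (since $m\geq 3$) $q_{j}=q_{1}+c_{j}$, $c_{j}\in\mathbb{C}$, for all $j$. Then $f$ and $Rf^{(k)}$ are constant multiples of $e^{q_{1}}$, say $f=Be^{q_{1}}$ (with $B\neq 0$, else $f\equiv 0$) and $Rf^{(k)}=Ae^{q_{1}}$ (with $A\neq 0$, else $f^{(k)}\equiv 0$, against transcendence). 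Writing $(e^{q_{1}})^{(k)}=P\,e^{q_{1}}$ with $P$ a polynomial, $RBP\,e^{q_{1}}=Ae^{q_{1}}$ forces $RP$ to be a nonzero constant, so $R$ and $P$ are nonzero constants; a nonzero constant $P$ forces $\deg q_{1}=1$, hence $\alpha=mq_{1}+\text{const}$ is linear. Taking $\alpha=az+b$, $q_{1}=(az+b)/m$, $c:=B$ and substituting into (\ref{2.e}) yields $c^{m}\bigl(R^{m}(a/m)^{km}+1\bigr)=1$ with $a\neq 0$, which is assertion~(B).

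For $m=2$, the factorization is $(f-\iota Rf^{(k)})(f+\iota Rf^{(k)})=e^{\alpha}$, so $f=\tfrac12(e^{q_{1}}+e^{q_{2}})$, $Rf^{(k)}=\tfrac{\iota}{2}(e^{q_{1}}-e^{q_{2}})$, $q_{1}+q_{2}=\alpha$; substituting $f^{(k)}=\tfrac12(P_{1}e^{q_{1}}+P_{2}e^{q_{2}})$ (where $(e^{q_{j}})^{(k)}=P_{j}e^{q_{j}}$) into the second relation gives $(RP_{1}-\iota)e^{q_{1}}+(RP_{2}+\iota)e^{q_{2}}=0$. If $q_{1}-q_{2}$ is non-constant, $e^{q_{1}},e^{q_{2}}$ are linearly independent over the rational functions, so $RP_{1}=\iota$, $RP_{2}=-\iota$, whence $R,P_{1},P_{2}$ are nonzero constants, $q_{1},q_{2}$ are linear with distinct leading coefficients $\mu_{1}\neq\mu_{2}$, $P_{j}=\mu_{j}^{k}$, $\mu_{1}^{k}+\mu_{2}^{k}=0$; rewriting $f=e^{(a_{1}z+a_{2})/2}\sin(b_{1}z+b_{2})$ with $\mu_{1}=\tfrac{a_{1}}{2}+\iota b_{1}$, $\mu_{2}=\tfrac{a_{1}}{2}-\iota b_{1}$ (so $b_{1}\neq 0$) recasts $RP_{j}=\pm\iota$ as $R(\tfrac{a_{1}}{2}\pm\iota b_{1})^{k}=\pm\iota$ and, when $a_{1}=0$, $\mu_{1}^{k}+\mu_{2}^{k}=0$ as ``$k$ odd'' --- this is (A2). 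If instead $q_{1}-q_{2}$ is a constant $c$, then $e^{c}\neq\pm 1$ (else $f^{(k)}\equiv 0$ or the relation is contradictory), $P_{1}=P_{2}=:P$, $f$ is a nonzero constant times $e^{q_{2}}$, and the relation forces $RP$ to be a nonzero constant; so $R$ is a nonzero constant, $q_{2}$ is linear, and comparing with $e^{\alpha}=e^{az+b}$ and parametrising the constant by $d$ gives (A1) with $d^{2}(R(a/2)^{k}-\iota)=R(a/2)^{k}+\iota$. The main obstacle is the pole analysis in the first step --- the generic poles fall to an order count, but excluding poles of $f$ at zeros of $R$ of order $k$ requires a delicate descent through the Laurent coefficients; beyond the factorization, everything is a direct though lengthy computation resting on Borel's theorem.
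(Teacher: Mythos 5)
Your argument is essentially correct and, for $m\geq 3$, takes a genuinely different route from the paper. Where you factor $f^m+(Rf^{(k)})^m=\prod_{j=1}^{m}(f-\zeta_jRf^{(k)})=e^{\alpha}$, write each zero-free factor as $e^{q_j}$, and apply Borel's theorem to the three-term exponential identity obtained from two pairs, the paper instead normalizes to $(e^{-\alpha/m}f)^m+(Re^{-\alpha/m}f^{(k)})^m=1$ and runs the second-fundamental-theorem argument of its Lemma \ref{l3} to conclude that for $m\geq 3$ both summands must be \emph{constants}; your route is more self-contained for this case (no value-distribution lemma is needed) and reaches the same endpoint $f=Be^{q_1}$, $Rf^{(k)}=Ae^{q_1}$. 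For $m=2$ the two proofs coincide in substance: your $e^{q_1}=f-\iota Rf^{(k)}$ and $e^{q_2}=f+\iota Rf^{(k)}$ are exactly the paper's $e^{P_1}$, $e^{P_2}$ with $P_{1,2}=\alpha/2\pm\iota h$ coming from Gross's $\sin h$/$\cos h$ parametrization, and the subsequent dichotomy ($q_1-q_2$ constant versus non-constant) is identical; your constant in (A1) differs from the paper's only by the reparametrization $d\mapsto\iota d$. Both proofs use Theorem \ref{t2.2} in the same way to get finite order.

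The one genuine gap is the entirety step. Your order count correctly reduces a putative pole of $f$ to a zero $z_0$ of $R$ of order exactly $k$, but at such a point the leading Laurent coefficients of $f^m$ and $(Rf^{(k)})^m$ \emph{can} cancel: writing $R=(z-z_0)^kS$ with $S(z_0)\neq 0$ and letting $p$ be the pole order, cancellation at top order only requires $1+S(z_0)^m(-1)^{km}\left((p+k-1)!/(p-1)!\right)^m=0$, which is satisfiable. So the ``descent through the Laurent coefficients'' that you defer is doing real work and is not carried out. (The paper is no more explicit here: it simply asserts that the normalized form shows $f$ has no poles, which is the same unproved claim.) A repair that avoids the local analysis entirely: since $f$ has at most finitely many poles, each factor $g_j=f-\zeta_jRf^{(k)}$ has finitely many poles and, because $\prod_j g_j=e^{\alpha}$ is zero-free, finitely many zeros, hence $g_j=R_je^{q_j}$ with $R_j$ rational; your Borel argument then runs verbatim with rational multipliers (this is exactly the device used in the proof of Theorem \ref{t2.4}), and the entirety of $f$ and the constancy of $R$ drop out at the end from the explicit forms. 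With that adjustment your proof is complete.
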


\begin{rem} Obviously the scope of Theorem E is extended in Theorem \ref{t2.3} by considering a larger class of functional equation (\ref{2.e}) including the case when $k=1$, $R\equiv 1$ and $\alpha(z)=az+b$ which leads to the specific type of functional equation (\ref{2e}). Consequently Theorem \ref{t2.3} improves Theorem E. Also Theorem \ref{t2.3} shows that the Eq. (\ref{2.e}) has no non-constant rational solutions.
\end{rem}

\medskip
Next, we intend to explore all possible characterizations of solutions to the Eq. (\ref{2.d}) when $Q(\not\equiv 0)$ and $R(\not\equiv 0)$ are rational functions. Consequently, we obtain the following result.

\begin{theo}\label{t2.4} Let $Q(\not\equiv 0)$ and $R(\not\equiv 0)$ be rational functions, $\alpha$ be a polynomial and let $k(\geq 1)$ and $m(\geq 2)$ be two integers. Then the transcendental meromorphic solution of the Eq. (\ref{2.d}) are characterized as follows:
\begin{enumerate}
\item[(A)] $f=R_1e^{\alpha/m}$, where $\alpha$ is non-constant and $R_1(\not\equiv 0)$ is a rational function such that $mk\deg(\alpha^{(1)})=\deg(Q-R_1^m)-m\deg(RR_1)$ and $R_1^m+R^m e^{-\alpha}\left((R_1e^{\alpha/m})^{(k)}\right)^m=Q$;

\medskip
\item[(B)] $f=\frac{d^2Q_1-Q_2}{2\iota d}e^{\alpha/2}$,
where $Q_1$ and $Q_2$ are rational functions and $d(\neq 0)$ is a constant such that $Q_1Q_2=Q$ and $d^2Q_1-Q_2\not\equiv 0$ and $\alpha$ is non-constant;

\medskip
\item[(C)] $f(z)=\frac{Q_1e^{a_1z+b_1}-Q_2e^{a_2z+b_2}}{2\iota}$,
where $R\equiv A(\text{constant})$ such that $Aa_1^k=\iota$, $Aa_2^k=-\iota$, $Q_1$ and $Q_2$ are constants such that $Q_1Q_2=Q$, $b_1$ and $b_2$ are constants such that $\alpha(z)=(a_1+a_2)z+b_1+b_2$. Furthermore if $\alpha$ is constant, i.e., if $a_1+a_2=0$, then $k$ is odd;

\medskip
\item[(D)] $f(z)=\frac{Q_1(z)e^{a_1z+b_1}-Q_2(z)e^{a_2z+b_2}}{2\iota}$,
where $Q\equiv B(\text{constant})$, $Q_1$, $Q_2$ and $R$ are non-constant rational functions such that $Q_1Q_2=B$, $\deg(R)=0$ and $a_1(\neq 0), a_2(\neq 0)$, $b_1$ and $b_2$ are constants such that $a_1^k+a_2^k=0$ and $\alpha(z)=(a_1+a_2)z+b_1+b_2$. Furthermore if $\alpha$ is a constant, then $k$ is odd;

\medskip
\item[(E)] $f(z)=\frac{Q_1(z)e^{tP(z)+c}-Q_2(z)e^{P(z)}}{2\iota}$,
where $P$ is a non-constant polynomial, $Q_1$, $Q_2$ and $R$ are non-constant rational functions such that $k\deg\left(P^{(1)}\right)=-\deg(R)$, $Q_1Q_2=Q$, $\deg(R)<0$, $c$ and $t$ are constants such that $t^k=-1$ and $(t+1)P^{(1)}=\alpha^{(1)}$. If $\alpha$ is a constant, then $t=-1$ and $k$ is odd.
Furthermore if all the zeros of $R$ have multiplicities at most $k-1$, then $Q_1$, $Q_2$ and $Q$ are all polynomials. If $k=1$, then $Q_1$, $Q_2$ and $Q$ are all non-zero constants and $R=\frac{1}{P^{(1)}}$.

\end{enumerate}
\end{theo}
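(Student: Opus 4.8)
\emph{Sketch of the intended proof.} By Theorem \ref{t2.2} every meromorphic solution $f$ of (\ref{2.d}) has finite order. The key device is the factorization over $\mathbb{C}$: if $\zeta_{0},\dots,\zeta_{m-1}$ denote the $m$-th roots of $-1$, then
\[
f^{m}+\big(Rf^{(k)}\big)^{m}=\prod_{j=0}^{m-1}\big(f-\zeta_{j}Rf^{(k)}\big)=Qe^{\alpha}.
\]
First I would rule out infinitely many poles: at a pole $z_{0}$ of $f$ that is not a zero or pole of $R$, each factor $f-\zeta_{j}Rf^{(k)}$ has a pole of the same order as $Rf^{(k)}$, which strictly exceeds the pole order of $f$ since $k\geq 1$; so the product has a pole of $m$ times that order there, and as $Qe^{\alpha}$ has only finitely many poles, $f$ has only finitely many poles. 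Hence each $g_{j}:=f-\zeta_{j}Rf^{(k)}$ is meromorphic of finite order with $N(r,g_{j})=O(\log r)$; and since the divisor of $\prod_{j}g_{j}$ equals the (finite) divisor of $Q$ while the poles of the $g_{i}$ form a finite set, also $N(r,1/g_{j})=O(\log r)$. Hadamard's factorization theorem then gives $g_{j}=S_{j}e^{\beta_{j}}$ with each $S_{j}(\not\equiv 0)$ rational and each $\beta_{j}$ a polynomial, and comparing $\prod_{j}S_{j}\,e^{\sum_{j}\beta_{j}}=Qe^{\alpha}$ (a rational function times the exponential of a polynomial) forces $\sum_{j}\beta_{j}=\alpha$ up to a constant and $\prod_{j}S_{j}=cQ$ for some $c\neq 0$.

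Next I would solve the linear system $f-\zeta_{j}Rf^{(k)}=S_{j}e^{\beta_{j}}$ ($j=0,1$) for $f$ and $f^{(k)}$, writing each as a $\mathbb{C}$-combination of $S_{0}e^{\beta_{0}}$ and $S_{1}e^{\beta_{1}}$, and then split on whether some difference $\beta_{j}-\beta_{l}$ is constant. If it is, then $g_{j}=\varphi g_{l}$ for a rational $\varphi$, whence $(1-\varphi)f=(\zeta_{j}-\varphi\zeta_{l})Rf^{(k)}$ and so $f^{(k)}=\psi f$ for a rational $\psi$ (degenerate sub-cases being excluded by transcendence of $f$); substituting into (\ref{2.d}) gives $f^{m}\big(1+(R\psi)^{m}\big)=Qe^{\alpha}$, so $fe^{-\alpha/m}$ is a meromorphic $m$-th root of a rational function, hence rational, and $f=R_{1}e^{\alpha/m}$ with $\alpha$ non-constant (otherwise $f$ would be rational). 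Substituting back yields the identity and the degree relation $mk\deg(\alpha^{(1)})=\deg(Q-R_{1}^{m})-m\deg(RR_{1})$ of case~(A), and when $m=2$ the freedom $S_{0}S_{1}=cQ$ can be rewritten (with $d=\iota$ and suitable rational $Q_{1},Q_{2}$ with $Q_{1}Q_{2}=Q$) in the normalized form of case~(B). If instead no difference $\beta_{j}-\beta_{l}$ is constant, then for $m\geq 3$ substituting the two-term expressions for $f$ and $Rf^{(k)}$ into the $j=2$ equation gives $a_{0}e^{\beta_{0}}+a_{1}e^{\beta_{1}}=S_{2}e^{\beta_{2}}$ with rational $a_{0},a_{1}$ and pairwise non-constant exponent differences, which by Borel's theorem forces $S_{2}\equiv 0$, a contradiction; so for $m\geq 3$ only case~(A) occurs, and the remaining work is the case $m=2$ with $\beta_{0}-\beta_{1}$ non-constant.

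In that remaining case $\zeta_{0}=\iota$, $\zeta_{1}=-\iota$, and with $g_{\pm}=f\pm\iota Rf^{(k)}=S_{\pm}e^{\beta_{\pm}}$ one has $f=\tfrac{1}{2}(g_{+}+g_{-})$, $Rf^{(k)}=\tfrac{1}{2\iota}(g_{+}-g_{-})$ and $g_{+}g_{-}=Qe^{\alpha}$. Writing $(S_{\pm}e^{\beta_{\pm}})^{(k)}=T_{\pm}e^{\beta_{\pm}}$ ($T_{\pm}$ rational), equating the two expressions for $Rf^{(k)}$, and applying Borel's theorem to separate the $e^{\beta_{+}}$ and $e^{\beta_{-}}$ parts yields the two equations $Rg_{+}^{(k)}=-\iota g_{+}$, $Rg_{-}^{(k)}=\iota g_{-}$. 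Thus each of $g_{+},g_{-}$ solves an equation $Rw^{(k)}=\lambda w$ with $\lambda=\mp\iota$, and I would analyze such an equation for $w=Se^{\beta}$: with $\phi:=\tfrac{S'}{S}+\beta'$ one has $w^{(k)}/w=L_{k}(\phi)$ for a universal differential polynomial $L_{k}(\phi)=\phi^{k}+\cdots$, so $RL_{k}(\phi)=\lambda$; comparing degrees forces $\deg R=-k\deg\beta'\leq 0$, comparing leading coefficients links the leading coefficient of $\beta'$ to $\lambda$ and $R^{-1}$, and the equation forbids $w$ poles except at zeros of $R$ of order exactly $k$. Three regimes appear. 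If $R$ is a constant, $w$ satisfies a constant-coefficient linear ODE whose characteristic polynomial $Rc^{k}-\lambda$ has simple roots, forcing $S$ constant and $\beta$ linear, with $Ra_{1}^{k}=\iota$, $Ra_{2}^{k}=-\iota$: case~(C). If $\deg R=0$ with $R$ non-constant, then $\beta_{\pm}$ are linear, the leading-coefficient relations give $a_{1}^{k}+a_{2}^{k}=0$, and analysing the rational solutions $S_{\pm}$ of the resulting $k$-th order linear ODEs together with $g_{+}g_{-}=Qe^{\alpha}$ forces $Q$ to be constant: case~(D). If $\deg R<0$, then $\beta_{\pm}$ have degree $\geq 2$ with $k\deg(P^{(1)})=-\deg R$ for $P:=\beta_{-}$, and, since $\beta_{+}+\beta_{-}-\alpha$ is constant, the more refined degree/leading-coefficient bookkeeping in $Rg_{+}^{(k)}=-\iota g_{+}$ versus $Rg_{-}^{(k)}=\iota g_{-}$ forces $\beta_{+}=tP+c$ with $t^{k}=-1$ and $(t+1)P^{(1)}=\alpha^{(1)}$: case~(E), in which reading off the pole orders of $g_{\pm}$ at zeros of $R$ shows $Q_{1},Q_{2},Q$ are polynomials when all zeros of $R$ have multiplicity $\leq k-1$, and (using that for $k=1$ the identity $(g_{+}g_{-})'\equiv 0$ makes $g_{+}g_{-}$ constant) that they are non-zero constants with $R=1/P^{(1)}$ when $k=1$. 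In each regime the identity $f=\tfrac{1}{2\iota}\big(\iota S_{+}e^{\beta_{+}}-(-\iota S_{-})e^{\beta_{-}}\big)$ casts $f$ in the stated form with $Q_{1}=\iota S_{+}$, $Q_{2}=-\iota S_{-}$, $Q_{1}Q_{2}=cQ$, and the ``$\alpha$ constant'' clauses are exactly the sub-cases in which the leading terms of $\beta_{+}'$ and $\beta_{-}'$ cancel, which forces $k$ odd.

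The main obstacle is the third paragraph: the analysis of $Rw^{(k)}=\lambda w$ for $w=Se^{\beta}$ and the bookkeeping of poles and zeros of $g_{+},g_{-}$ at the zeros and poles of $R$. Establishing that $Q$ is constant in case~(D), that $Q_{1},Q_{2},Q$ are polynomials (or constants) in case~(E), and—most delicate—that $\beta_{+}$ is a scalar multiple of $P$ in case~(E) requires matching local orders against the global constraint $\deg R\leq 0$ and against the two coupled differential relations, and disentangling how the leading coefficients of $\beta_{+}',\beta_{-}',\alpha^{(1)}$ and the root $t$ are interlocked. By contrast, the reduction to finite order, the pole count, the factorization $g_{j}=S_{j}e^{\beta_{j}}$, the Borel arguments, the branch producing case~(A)/(B), and the collapse to case~(A) for $m\geq 3$ are comparatively routine.
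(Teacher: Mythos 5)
Your sketch is correct in outline and, for the heart of the proof (the case $m=2$ with $\beta_{+}-\beta_{-}$ non-constant), it coincides with the paper's argument: both reduce, via Hadamard factorization of the two linear factors and a Borel separation of the $e^{\beta_{+}}$ and $e^{\beta_{-}}$ parts, to the decoupled pair $Rg_{\pm}^{(k)}=\mp\iota g_{\pm}$ with $g_{\pm}=S_{\pm}e^{\beta_{\pm}}$, and then run the same trichotomy on $\deg(R)$ (constant, zero, negative) to produce cases (C), (D), (E), while the branch in which the exponent difference is constant yields (A) and (B) exactly as in the paper's Case 1 and Sub-case 2.1. The one genuinely different ingredient is your elimination of $m\geq 3$: you factor $f^{m}+(Rf^{(k)})^{m}$ into $m$ linear factors over the $m$-th roots of $-1$ and invoke Borel's theorem, whereas the paper applies its Nevanlinna-type lemma (the second-fundamental-theorem argument giving $\frac{1}{m}+\frac{1}{n}\geq 1$) to force $m=2$ whenever $e^{-\alpha/m}f$ is transcendental; your route is more elementary and self-contained, the paper's is shorter. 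The steps you flag as the main obstacles are precisely those the paper settles by adding the two identities $R\tilde R_{1}(g_{1})=\iota$ and $R\tilde R_{2}(g_{2})=-\iota$ and comparing degrees of the resulting rational identity term by term: this gives $a_{1}^{k}+a_{2}^{k}=0$ and, by an iterated degree argument, $(Q_{1}Q_{2})^{(1)}\equiv 0$ (so $Q$ is constant) in case (D), and $\left(P_{1}^{(1)}\right)^{k}+\left(P_{2}^{(1)}\right)^{k}\equiv 0$ (so $P_{1}^{(1)}=tP_{2}^{(1)}$ with $t^{k}=-1$) in case (E); your proposed leading-coefficient bookkeeping is exactly this computation, so no change of strategy is needed to close the sketch. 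Your observation that $(g_{+}g_{-})^{(1)}\equiv 0$ when $k=1$, which makes $Q_{1}Q_{2}e^{\alpha}$ constant directly, is in fact cleaner than the paper's appeal to Theorem B at that point.
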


The following examples show the existence of transcendental meromorphic solutions of the Eq. (\ref{2.d}).

\begin{exm} Let $m=2$, $k=1$, 
\[R(z)=\frac{z^2+1}{z+1},\;Q(z)=\left(\frac{z-1}{z+1}\right)^2+\left(\frac{(z^2+1)(z^2+3)}{2(z+1)^3}\right)^2\;\;\text{and}\;\;\alpha(z)=z.\]

If $R_1(z)=\frac{z-1}{z+1}$, then it is easy to verify that $km\deg(\alpha^{(1)})=\deg(Q-R_1^m)-m\deg(RR_1)$ and $R_1^m+R^m e^{-\alpha}\left((R_1e^{\alpha/m})^{(k)}\right)^m=Q$. Also it is easy to check that $f=R_1e^{\frac{\alpha}{m}}$ is a solution of $f^m+(Rf^{(k)})^m=Qe^{\alpha}$.
\end{exm}

\begin{exm} Let $m=3$, $k=1$, 
\[R(z)=\frac{1}{z},\;Q(z)=\frac{z^9+3z^8+3z^7+4z^6+11z^5+3z^4+4z^3+1}{z^3}\;\;\text{and}\;\;\alpha(z)=z^3.\]

If $R_1(z)=z+1$, then it is easy to verify that $km\deg(\alpha^{(1)})=\deg(Q-R_1^m)-m\deg(RR_1)$ and $R_1^m+R^m e^{-\alpha}\left((R_1e^{\alpha/m})^{(k)}\right)^m=Q$. Also it is easy to check that $f=R_1e^{\frac{\alpha}{m}}$ is a solution of $f^m+(Rf^{(k)})^m=Qe^{\alpha}$.
\end{exm}

\smallskip
When $\alpha$ is a constant, we get the following corollary from Theorem \ref{t2.4}.

\begin{cor}\label{c2.2} Let $Q(\not\equiv 0)$ and $R(\not\equiv 0)$ be rational functions and let $k(\geq 1)$ and $m(\geq 2)$ be two integers. Then the transcendental meromorphic solution of the following Eq.
\[f^m+\big(Rf^{(k)}\big)^m=Q\]
are characterized as follows:
\begin{enumerate}
\item[(A)] $f(z)=\frac{Q_1e^{a_1z+b_1}-Q_2e^{-a_1z+b_2}}{2\iota}$,
where $k$ is odd, $R\equiv A(\text{constant})$ such that $Aa_1^k=\iota$, $Q_1$ and $Q_2$ are constants such that $Q_1Q_2=Q$, $b_1$ and $b_2$ are constants;

\medskip
\item[(B)] $f(z)=\frac{Q_1(z)e^{a_1z+b_1}-Q_2(z)e^{-a_1z+b_2}}{2\iota}$,
where $k$ is odd, $Q\equiv B(\text{constant})$, $Q_1$, $Q_2$ and $R$ are non-constant rational functions such that $Q_1Q_2=B$, $\deg(R)=0$ and $a_1(\neq 0)$, $b_1$ and $b_2$ are constants;

\medskip
\item[(C)] $f(z)=\frac{Q_1(z)e^{tP(z)+c}-Q_2(z)e^{P(z)}}{2\iota}$,
where $k$ is odd, $P$ is a non-constant polynomial, $Q_1$, $Q_2$ and $R$ are non-constant rational functions such that $k\deg\left(P^{(1)}\right)=-\deg(R)$, $Q_1Q_2=Q$, $\deg(R)<0$ and $c$ is a constant.
Furthermore if all the zeros of $R$ have multiplicities at most $k-1$, then $Q_1$, $Q_2$ and $Q$ are all polynomials. If $k=1$, then $Q_1$, $Q_2$ and $Q$ are all non-zero constants and $R=\frac{1}{P^{(1)}}$.
\end{enumerate}
\end{cor}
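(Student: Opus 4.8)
The plan is to obtain Corollary \ref{c2.2} as the specialization of Theorem \ref{t2.4} to the case of a constant exponential factor. If $c_{0}\in\mathbb{C}$ and $f$ is a transcendental meromorphic solution of $f^{m}+\big(Rf^{(k)}\big)^{m}=Q$, then $f$ is also a transcendental meromorphic solution of Eq. (\ref{2.d}) with the polynomial $\alpha$ replaced by the constant $c_{0}$ and the rational function $Q$ replaced by $Qe^{-c_{0}}$ (still a non-zero rational function), and conversely every solution of Eq. (\ref{2.d}) with constant $\alpha$ arises this way. Hence it suffices to read off what the five alternatives (A)--(E) of Theorem \ref{t2.4} become once $\alpha$ is taken to be constant.

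First, alternatives (A) and (B) of Theorem \ref{t2.4} explicitly require $\alpha$ to be non-constant, so they do not occur; in particular no solution of the form $R_{1}e^{\alpha/m}$ or $\frac{d^{2}Q_{1}-Q_{2}}{2\iota d}e^{\alpha/2}$ is possible. For alternative (C): since $\alpha(z)=(a_{1}+a_{2})z+b_{1}+b_{2}$ is constant we get $a_{2}=-a_{1}$ and, by the final clause of (C), $k$ is odd; with $R\equiv A$ and $Aa_{1}^{k}=\iota$ the companion relation $Aa_{2}^{k}=-\iota$ is automatic because $a_{2}^{k}=(-a_{1})^{k}=-a_{1}^{k}$ for odd $k$, and substituting $a_{2}=-a_{1}$ in the expression for $f$ yields exactly Corollary \ref{c2.2}(A). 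For alternative (D): the relation $\alpha(z)=(a_{1}+a_{2})z+b_{1}+b_{2}$ again forces $a_{1}+a_{2}=0$, the final clause gives $k$ odd, and retaining $Q\equiv B$, $Q_{1}Q_{2}=B$, $\deg(R)=0$ with $Q_{1},Q_{2},R$ non-constant rational gives Corollary \ref{c2.2}(B). For alternative (E): a constant $\alpha$ forces $t=-1$ and $k$ odd, whereupon the side condition $(t+1)P^{(1)}=\alpha^{(1)}$ becomes trivial; putting $t=-1$ into the formula for $f$ and keeping $k\deg\big(P^{(1)}\big)=-\deg(R)$, $\deg(R)<0$, $Q_{1}Q_{2}=Q$, together with the supplementary assertions on the multiplicities of the zeros of $R$ and on the case $k=1$, gives Corollary \ref{c2.2}(C). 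Collecting the three surviving cases completes the argument.

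As this is a direct corollary of an already-established theorem, there is no serious obstacle. The only points deserving care are the sign bookkeeping when $k$ is odd (so that $(-a_{1})^{k}=-a_{1}^{k}$ and the pair of constraints $Aa_{1}^{k}=\iota$, $Aa_{2}^{k}=-\iota$ stays consistent under $a_{2}=-a_{1}$), and checking that none of the cases (C), (D), (E) becomes vacuous or merges with another when $\alpha$ is specialized to a constant, so that the list (A), (B), (C) of the corollary is genuinely exhaustive.
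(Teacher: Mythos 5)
Your proposal is correct and follows exactly the route the paper itself takes: the corollary is obtained by specializing Theorem \ref{t2.4} to constant $\alpha$, discarding cases (A) and (B) (which require $\alpha$ non-constant) and reducing (C), (D), (E) via $a_2=-a_1$, $k$ odd, and $t=-1$ respectively. Your sign bookkeeping for odd $k$ and the observation that the exponential factor $e^{c_0}$ can be absorbed into $Q$ are exactly the (implicit) content of the paper's one-line derivation.
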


\smallskip
\begin{rem} Regarding Corollary \ref{c2.2}, we observe that the scope of Theorems C and D were extended in Corollary \ref{c2.2} by considering a larger class of functional equations including the case when
\end{rem}

\begin{enumerate}
\item[(i)] $k=1$, $m=2$, $R=P$ and $Q$ are non-zero polynomials;
\item[(ii)] $k=1$, $m=2$, $R=1/P$ and $Q(\not\equiv 0)$ are rational functions, where $P$ is a non-zero polynomial
\end{enumerate}
which leads to the specific type of Fermat-type functional equation.

\medskip
From the Eq. (\ref{2.c}), it is easy to verify that $\deg(\alpha)\leq \mu(f)$. Therefore for our further study, we raise the following questions.

\begin{ques}  What can be concluded about the existence of transcendental meromorphic solution $f$ of the Eq. (\ref{2.c}) when $0<\deg(\alpha)=\mu(f)$?
\end{ques}

\begin{ques} What can be concluded about the existence of transcendental meromorphic solutions of the Eq. (\ref{2.ca}) when $n>m=1$ and $n\leq k$?
\end{ques}

\section{\bf{Some Lemmas}} 

\begin{lem}\label{l2}\cite{MK1} Let $f$ be a non-constant meromorphic function, $P(f)=\sum_{k=0}^{p} a_{k}f^{k}$ and $Q(f)=\sum_{j=0}^{q} b_{j}f^{j}$ be two mutually prime polynomials in $f$, where $a_{k}$ and $b_{j}$ are small functions of $f$ such that $a_{p}b_{q}\not\equiv 0$. If $R(f)=\frac{P(f)}{Q(f)}$, then $T(r,R(f))=\max\{p,q\}\;T(r,f)+S(r,f).$
\end{lem}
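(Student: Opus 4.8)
The statement is the Valiron--Mohon'ko identity, and the plan is to prove it by establishing the two inequalities $T(r,R(f))\le \max\{p,q\}\,T(r,f)+S(r,f)$ and $T(r,R(f))\ge \max\{p,q\}\,T(r,f)+S(r,f)$ separately, writing $d=\max\{p,q\}$ throughout. Since $T(r,1/R(f))=T(r,R(f))+O(1)$ and replacing $R=P/Q$ by $1/R=Q/P$ merely interchanges the two coprime polynomials, I may assume without loss of generality that $p\ge q$, so that $d=p$ and the numerator carries the top degree.

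The core building block is the polynomial case: for $P(f)=\sum_{k=0}^{p}a_k f^k$ with $a_p\not\equiv 0$ one has $T(r,P(f))=p\,T(r,f)+S(r,f)$. I would prove this by a pointwise estimate on $|z|=r$, splitting the circle into the arc where $|f|\ge\Lambda(z)$ and the arc where $|f|<\Lambda(z)$, with the threshold $\Lambda(z)=2p\max_k|a_k/a_p|+1$ chosen so that on the first arc the leading term dominates, $|P(f)|\ge\tfrac12|a_p|\,|f|^p$. There $p\log^+|f|\le \log^+|P(f)|+\log^+(1/|a_p|)+O(1)$, while on the second arc the bound $p\log^+|f|<p\log^+\Lambda(z)$ integrates to an $S(r,f)$ term because the coefficient ratios are small functions; integrating gives $p\,m(r,f)\le m(r,P(f))+S(r,f)$, and the reverse estimate follows from the same splitting. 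Comparing pole orders (at a pole of $f$ away from the singularities of the $a_k$, $P(f)$ has a pole of order exactly $p$ times that of $f$) gives $p\,N(r,f)=N(r,P(f))+S(r,f)$ up to $S(r,f)$, and adding the two pairs of estimates yields the polynomial identity.

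For general $R=P/Q$ the upper bound $T(r,R(f))\le d\,T(r,f)+S(r,f)$ is obtained by direct estimates: the poles of $R(f)$ arise only from the at most $q$ moving targets given by the roots of $Q$, together with the poles of $f$ when $p>q$, so that $N(r,R(f))$ is controlled by a sum of at most $q$ counting functions $N(r,1/(f-\beta_j))$ (plus $(p-q)N(r,f)$ when $p>q$), hence by $d\,T(r,f)+S(r,f)$, and the companion bound for $m(r,R(f))$ comes from the same large-$|f|$ versus near-a-root-of-$Q$ dichotomy. The lower bound is the heart of the matter. Here I would fix a generic constant $c$ so that $S_c(w):=P(w)-cQ(w)$ has degree exactly $d$ in $w$ (possible since, by coprimality, the leading coefficient $a_d-cb_d$ is a nonzero small function for all but at most one $c$), apply the polynomial identity to $S_c$, and observe that the zeros of $R(f)-c$ coincide with the preimages under $f$ of the $d$ roots $\gamma_1(c),\dots,\gamma_d(c)$ of $S_c$, which by coprimality are distinct small functions. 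The first fundamental theorem then gives
\[
T(r,R(f))\ge N\!\left(r,\frac{1}{R(f)-c}\right)=\sum_{i}N\!\left(r,\frac{1}{f-\gamma_i(c)}\right)+S(r,f)=d\,T(r,f)-\sum_i m\!\left(r,\frac{1}{f-\gamma_i(c)}\right)+S(r,f).
\]

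The main obstacle is exactly this last sum of proximity functions: to recover the full factor $d$ rather than a strictly smaller multiple of $T(r,f)$, I must ensure $\sum_i m(r,1/(f-\gamma_i(c)))=S(r,f)$. This is where the genericity of $c$ is essential --- the defect relation for small targets permits at most countably many deficient values, so for a suitable choice of $c$ the moving targets $\gamma_i(c)$ are non-deficient and their proximity contributions are absorbed into $S(r,f)$. I expect this control of the auxiliary proximity terms, rather than any of the pointwise estimates, to be the delicate step; once it is in place the two inequalities combine to give $T(r,R(f))=d\,T(r,f)+S(r,f)$. The hypothesis $a_pb_q\not\equiv 0$ together with the coprimality of $P$ and $Q$ is used precisely to guarantee the top-degree domination and the distinctness of the targets that make both directions sharp.
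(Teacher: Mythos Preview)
The paper does not prove this lemma at all; it is quoted from Mohon'ko's 1971 paper (reference \cite{MK1}) and used as a black box, so there is no ``paper's own proof'' to compare against.  Your polynomial case and your upper-bound sketch via the partial-fraction/dichotomy argument are along the right lines and can be made rigorous.

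The lower bound, however, has a genuine gap.  You choose a generic $c$ so that the $d$ moving targets $\gamma_i(c)$ are non-deficient and then assert that ``their proximity contributions are absorbed into $S(r,f)$''.  But $\delta(\gamma_i,f)=0$ only says $\liminf_{r\to\infty} m(r,1/(f-\gamma_i))/T(r,f)=0$; it does \emph{not} give $m(r,1/(f-\gamma_i))=S(r,f)$, which would require the ratio to tend to $0$ outside a set of finite measure.  A function can have $m(r,1/(f-a))/T(r,f)$ oscillating between $0$ and $1/2$ forever, so that $a$ is non-deficient yet $m(r,1/(f-a))\neq S(r,f)$.  Moreover, even setting this aside, you are invoking the second main theorem for moving targets (Osgood--Steinmetz, sharpened by Yamanoi in 2004) to prove a 1971 result that is far more elementary; this is methodologically backwards.

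The clean fix is already implicit in your upper-bound argument.  The pole count gives an \emph{equality} $N(r,R(f))=(p-q)^{+}N(r,f)+\sum_{\beta}m_{\beta}N(r,1/(f-\beta))+S(r,f)$, where the $\beta$ run over the roots of $Q$ with multiplicities $m_\beta$, and the same large-$|f|$ / near-a-root-of-$Q$ dichotomy (made precise through the partial-fraction decomposition of $R$) gives the matching equality for $m(r,R(f))$, not merely an inequality.  Adding the two and applying the first main theorem to each $T(r,1/(f-\beta))$ yields $T(r,R(f))=(p-q)^{+}T(r,f)+qT(r,f)+S(r,f)=d\,T(r,f)+S(r,f)$ directly, with no appeal to deficiencies.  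This is essentially Mohon'ko's original argument.
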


\begin{lem}\label{l3}\cite[Lemma 2.4]{ZY1} Let $f$ be a non-constant meromorphic function and let $k$ and $p$ be positive integers. Then
\beas \label{2} N_{p}\left(r,0;f^{(k)}\right) \leq T\left(r,f^{(k)}\right)-T(r,f)+ N_{p+k}(r,0;f) + S(r,f).\eeas
\end{lem}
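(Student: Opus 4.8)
\emph{Proof sketch.} The plan is to deduce the inequality from the lemma on the logarithmic derivative together with a pointwise comparison of the zero divisors of $f$ and of $f^{(k)}$. First I would establish the ``untruncated'' version. Writing $\frac{1}{f}=\frac{1}{f^{(k)}}\cdot\frac{f^{(k)}}{f}$ gives $m(r,0;f)\le m\bigl(r,0;f^{(k)}\bigr)+m\bigl(r,f^{(k)}/f\bigr)$, and the logarithmic derivative lemma yields $m\bigl(r,f^{(k)}/f\bigr)=S(r,f)$. Applying the First Fundamental Theorem to $f$ and to $f^{(k)}$, namely $m(r,0;f)=T(r,f)-N(r,0;f)+O(1)$ and $m\bigl(r,0;f^{(k)}\bigr)=T\bigl(r,f^{(k)}\bigr)-N\bigl(r,0;f^{(k)}\bigr)+O(1)$, and rearranging, one obtains
\[
N\bigl(r,0;f^{(k)}\bigr)\le T\bigl(r,f^{(k)}\bigr)-T(r,f)+N(r,0;f)+S(r,f).
\]
It then remains only to sharpen the counting functions, $N$ to $N_p$ on the left and to $N_{p+k}$ on the right.

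The key step is the local estimate
\[
N(r,0;f)-N_{p+k}(r,0;f)\le N\bigl(r,0;f^{(k)}\bigr)-N_p\bigl(r,0;f^{(k)}\bigr),
\]
which I would prove by comparing the two divisors point by point. If $z_0$ is a zero of $f$ of multiplicity $m$ with $m>k$, then $f^{(k)}$ has a zero of multiplicity exactly $m-k$ at $z_0$, and the contribution of $z_0$ to $N(r,0;f)-N_{p+k}(r,0;f)$, which is $\max\{m-p-k,0\}$, coincides with its contribution $\max\{m-k-p,0\}$ to $N\bigl(r,0;f^{(k)}\bigr)-N_p\bigl(r,0;f^{(k)}\bigr)$. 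If $m\le k$, then (since $p\ge 1$ forces $m<p+k$) the point $z_0$ contributes $0$ to the left-hand side. A pole of $f$ becomes a pole, not a zero, of $f^{(k)}$, and any zero of $f^{(k)}$ which is not a zero of $f$ only enlarges the right-hand side. Summing over all points gives the estimate.

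Finally I would combine the two displays: rewriting the local estimate as $N_p\bigl(r,0;f^{(k)}\bigr)\le N\bigl(r,0;f^{(k)}\bigr)-N(r,0;f)+N_{p+k}(r,0;f)$ and then bounding $N\bigl(r,0;f^{(k)}\bigr)$ by the first display, the terms $N(r,0;f)$ cancel and one is left with exactly
\[
N_p\bigl(r,0;f^{(k)}\bigr)\le T\bigl(r,f^{(k)}\bigr)-T(r,f)+N_{p+k}(r,0;f)+S(r,f).
\]
I expect the only delicate part to be the bookkeeping in the local estimate --- correctly handling zeros of $f$ of multiplicity at most $k$, zeros of $f^{(k)}$ not arising from zeros of $f$, and poles of $f$ --- but in each of those cases the left-hand side of the local inequality receives no extra contribution while the right-hand side can only grow, so the comparison is valid. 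Since the statement is quoted from \cite{ZY1}, one could of course simply invoke that reference, but the argument above reconstructs a self-contained proof.
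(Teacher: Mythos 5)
Your proof is correct: the untruncated inequality via the logarithmic derivative lemma and the First Fundamental Theorem, combined with the pointwise divisor comparison $N(r,0;f)-N_{p+k}(r,0;f)\le N\bigl(r,0;f^{(k)}\bigr)-N_p\bigl(r,0;f^{(k)}\bigr)$, is exactly the standard argument for this lemma. The paper itself offers no proof --- it simply quotes the result from \cite{ZY1} --- and your reconstruction matches the proof given there, with the bookkeeping for zeros of $f$ of multiplicity at most $k$, extraneous zeros of $f^{(k)}$, and poles all handled correctly.
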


\begin{lem}\label{l3.1}\cite[Lemma 2]{JC1} Let $f$ be a transcendental meromorphic function such that $f^{n}P(f)=Q(f)$, where $P(f)$ and $Q(f)$ are differential polynomials in $f$ with functions of small proximity related to $f$ as the coefficients and the degree of $Q(f)$ is at most $n$. Then $m(r,P)=S(r,f).$ 
\end{lem}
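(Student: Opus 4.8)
The plan is to run the classical Clunie-type argument: fix $r$, split the circle $|z|=r$ according to the size of $|f|$, and estimate $m(r,P(f))$ separately on the two pieces. Write $P(f)=\sum_{\lambda}a_\lambda M_\lambda(f)$ and $Q(f)=\sum_{\mu}b_\mu N_\mu(f)$, where each $M_\lambda(f)$, $N_\mu(f)$ is a monomial in $f,f^{(1)},f^{(2)},\ldots$ and the coefficients satisfy $m(r,a_\lambda)=S(r,f)$, $m(r,b_\mu)=S(r,f)$ (the hypothesis that the coefficients have small proximity). For a monomial $M(f)=\prod_{i}\bigl(f^{(i)}\bigr)^{s_i}$ of total degree $d=\sum_i s_i\ (\ge 0)$, I factor $M(f)=f^{d}\prod_i\bigl(f^{(i)}/f\bigr)^{s_i}$. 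Set $E_1=\{\theta:|f(re^{i\theta})|\le 1\}$ and $E_2=\{\theta:|f(re^{i\theta})|>1\}$.

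First I would estimate $\log^+|P(f)|$ on $E_1$. Since $|f|\le 1$ there, $|f|^{d}\le 1$ for every monomial of $P$, so
\[|P(f)|\le \sum_\lambda |a_\lambda|\prod_i\Bigl|\frac{f^{(i)}}{f}\Bigr|^{s_i}\qquad\text{on }E_1.\]
Taking $\log^+$, using $\log^+\sum_j|c_j|\le\sum_j\log^+|c_j|+O(1)$, integrating over $E_1$, and applying the lemma on the logarithmic derivative (so that $m(r,f^{(i)}/f)=S(r,f)$) together with $m(r,a_\lambda)=S(r,f)$, one obtains $\frac{1}{2\pi}\int_{E_1}\log^+|P(f)|\,d\theta=S(r,f)$.

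Next, on $E_2$ I would invoke the equation. There $|f|>1$, and $f^nP(f)=Q(f)$ gives $P(f)=Q(f)/f^n=f^{-n}\sum_\mu b_\mu N_\mu(f)$. For a monomial $N_\mu(f)=\prod_i\bigl(f^{(i)}\bigr)^{t_i}$ of $Q$, the hypothesis $\deg Q\le n$ means $e_\mu:=\sum_i t_i\le n$; hence $f^{-n}N_\mu(f)=f^{e_\mu-n}\prod_i\bigl(f^{(i)}/f\bigr)^{t_i}$ with $e_\mu-n\le 0$, and since $|f|>1$,
\[|P(f)|\le \sum_\mu|b_\mu|\,|f|^{e_\mu-n}\prod_i\Bigl|\frac{f^{(i)}}{f}\Bigr|^{t_i}\le\sum_\mu|b_\mu|\prod_i\Bigl|\frac{f^{(i)}}{f}\Bigr|^{t_i}\qquad\text{on }E_2.\]
Exactly as before, this yields $\frac{1}{2\pi}\int_{E_2}\log^+|P(f)|\,d\theta=S(r,f)$. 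Adding the two estimates gives $m(r,P(f))=\frac{1}{2\pi}\int_0^{2\pi}\log^+|P(f)|\,d\theta=S(r,f)$, which is the assertion.

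The one genuinely load-bearing step is the estimate on $E_2$: it is precisely the restriction $\deg Q\le n$ that forces the exponents $e_\mu-n$ to be non-positive, so that the potentially large factor $|f|^{e_\mu-n}$ is harmless exactly where $|f|$ is large, while on $E_1$ the factors $|f|^{d}$ are harmless because $|f|\le 1$; everything else is routine bookkeeping with the first fundamental theorem and the logarithmic derivative estimate. As usual, when $\rho(f)=\infty$ the symbol $S(r,f)$ is to be read as $o(T(r,f))$ as $r\to\infty$ outside a set of finite linear measure.
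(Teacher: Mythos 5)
Your argument is the classical Clunie splitting of the circle $|z|=r$ into $\{|f|\le 1\}$ and $\{|f|>1\}$, and it is correct, including the key observation that $\deg Q\le n$ makes the exponents $e_\mu-n$ non-positive exactly where $|f|$ is large. The paper itself gives no proof of this lemma (it is quoted directly from Clunie's article), and your proof coincides with the standard one given there, so there is nothing to compare beyond noting that your use of only $m(r,a_\lambda)=S(r,f)$ for the coefficients matches the ``small proximity'' hypothesis as stated.
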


\begin{lem} \label{l3.2}\cite[Lemma 3.5]{WKH} Let $F$ be meromorphic in a domain $D$ and $n$ be a positive integer. Then 
\beas \frac{F^{(n)}}{F}=f^{n}+\frac{n(n-1)}{2}f^{n-2}f^{(1)}+a_{n}f^{n-3}f^{(2)}+b_{n}f^{n-4}(f^{(1)})^{2}+P_{n-3}(f),\eeas 
where $f=F^{(1)}/F$, $a_{n}=\frac{1}{6}n(n-1)(n-2)$, $b_{n}=\frac{1}{8}n(n-1)(n-2)(n-3)$ and $P_{n-3}(f)$ is a differential polynomial with constant coefficients, which vanishes identically for $n\leq 3$ and has degree $n-3$ when $n>3$. 
\end{lem}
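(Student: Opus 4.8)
The plan is to prove the identity by induction on $n$, bootstrapping from the first‑order relation. Write $g_n := F^{(n)}/F$, so that $g_1 = f$; differentiating $F^{(n)} = g_n F$ and using $F' = fF$ gives $F^{(n+1)} = (g_n' + f g_n)F$, hence the recursion $g_{n+1} = g_n' + f g_n$. Since $g_1 = f$, an immediate induction shows each $g_n$ is a differential polynomial in $f$ with constant (indeed integer) coefficients. The key structural observation is a grading: assign the symbol $f^{(j)}$ the \emph{weight} $j+1$ and the \emph{degree} $1$; then $g\mapsto g'$ preserves degree and raises weight by $1$, while $g\mapsto fg$ raises both by $1$, so by induction $g_n$ is isobaric of weight $n$, and its monomials have degree at most $n$.

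Next I would classify the weight‑$n$ monomials of the three top degrees: a monomial of degree $d$ and weight $n$ has $\sum j_i = n-d$, so for $d=n$ the only one is $f^n$, for $d=n-1$ the only one is $f^{n-2}f'$, and for $d=n-2$ the only ones are $f^{n-3}f''$ and $f^{n-4}(f')^2$. Thus one may write $g_n = u_n f^n + v_n f^{n-2}f' + w_n f^{n-3}f'' + x_n f^{n-4}(f')^2 + P_{n-3}(f)$, where $P_{n-3}$ collects the constant‑coefficient part of degree $\le n-3$. Feeding this into $g_{n+1} = g_n' + fg_n$ and using that $fP_{n-3}$ has degree $\le n-2$ and $P_{n-3}'$ has degree $\le n-3$ — so neither reaches the four displayed monomials of $g_{n+1}$, whose degrees are $n+1,n,n-1,n-1$ — the only contributions to those four coefficients come from $fg_n$ (which shifts $u_n,v_n,w_n,x_n$ forward) and from differentiating $u_n f^n$ and $v_n f^{n-2}f'$ in $g_n'$, which produce $u_n n f^{n-1}f'$ and $v_n\big(f^{n-2}f'' + (n-2)f^{n-3}(f')^2\big)$. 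This yields the scalar recursions $u_{n+1}=u_n$, $v_{n+1}=v_n+n u_n$, $w_{n+1}=w_n+v_n$, $x_{n+1}=x_n+(n-2)v_n$. The cases $n=1,2,3$ are checked by hand: $g_2 = f^2+f'$ and $g_3 = f^3+3ff'+f''$, so $P_{n-3}\equiv 0$ there, consistent with $a_n=b_n=0$ for $n\le 3$.

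Solving the recursions from these initial data gives $u_n\equiv 1$, then $v_n = \sum_{j=1}^{n-1} j = \binom{n}{2} = \tfrac12 n(n-1)$, then by the hockey‑stick identity $w_n = \sum_{j=2}^{n-1}\binom{j}{2} = \binom{n}{3} = \tfrac16 n(n-1)(n-2) = a_n$, and $x_n = \sum_{j=3}^{n-1}(j-2)\binom{j}{2} = \sum_{j=3}^{n-1} 3\binom{j}{3} = 3\binom{n}{4} = \tfrac18 n(n-1)(n-2)(n-3) = b_n$, which is exactly the asserted form. That $P_{n-3}(f)$ has constant coefficients and degree $\le n-3$ is built into the construction; to see it has degree \emph{exactly} $n-3$ for $n>3$ I would track one leading coefficient, say that of $f^{n-4}f'''$, whose recursion (by the same analysis, the relevant contribution from $g_n'$ now coming from differentiating $w_n f^{n-3}f''$) reads $d_{n+1}=d_n+a_n$ with $d_3=0$, giving $d_n = \binom{n}{4}\ne 0$.

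I do not expect a real obstacle here: the whole content is the inductive bookkeeping, and the one point requiring care is precisely the grading argument guaranteeing that the lower‑order tail $P_{n-3}$ never contaminates the four tracked coefficients. Once the weight/degree filtration is fixed at the outset, the proof collapses to solving four one‑step linear recursions and applying the hockey‑stick identity.
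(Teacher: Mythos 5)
Your argument is correct and complete: the recursion $g_{n+1}=g_n'+fg_n$ for $g_n=F^{(n)}/F$, the weight/degree grading isolating the four top monomials, the resulting linear recursions for $u_n,v_n,w_n,x_n$, and the hockey-stick evaluations all check out (I verified the case $n=4$, which gives $f^4+6f^2f'+4ff''+3(f')^2+f'''$, in agreement with your coefficients and with $\binom{4}{4}=1$ for the leading term of $P_1$). Note, however, that the paper supplies no proof of this statement at all: it is quoted verbatim as Lemma 3.5 of Hayman's \emph{Meromorphic Functions}, so there is no in-paper argument to compare against. Your write-up is therefore a genuinely self-contained derivation of the cited classical identity, and the one point that actually requires care — that the tail $P_{\le n-3}$ and the derivatives of the degree-$(n-2)$ monomials never feed back into the four tracked coefficients — is exactly the point your grading argument settles.
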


In order to state the following lemma, we need the notation $M(r, f)$ to denote
the maximum of an entire function $f$, i.e., $M(r,f)=\max\limits_{|z|=r}|f(z)|$.

Suppose the Taylor expansion of entire function $f$ is $f(z)=\sum_{n=0}^{\infty} a_{n}z^{n}$. We denote by \[\mu (r,f)=\max \limits_{n\in\mathbb{N},\; |z|=r}\{|a_{n}z^{n}|\}\;\text{and}\;\nu(r,f)=\sup\{n:|a_{n}|r^{n}=\mu (r,f)\}.\]

Here it is enough to recall that (see \cite{JV1})
\begin{enumerate}
\item[(1)] $\nu(r,f)$ is increasing, piecewise constant, right-continuous and also tends to $+\infty$ as $r\rightarrow \infty$;
\item[(2)] $\log \nu(r,F)=O(\log r)$, if $\rho(f)<+\infty$.
\end{enumerate}

\begin{lem}\label{l3.3} \cite[Satz 21.3]{JV1} Let $f$ be a transcendental entire function and let $0<\delta<1/4$ and $z$ be such that $|z|=r$ and that $|f(z)|>M(r,f) (\nu(r,f))^{-\frac{1}{4}+\delta}$ holds. Then there exists a set $E\subset (0,+\infty)$ with finite logarithmic measure such that 
\beas \frac{f^{(m)}(z)}{f(z)}=\left(\frac{\nu(r,f)}{z}\right)^{m}(1+o(1))\eeas 
holds for all $m\geq 0$ and for all $r\not\in E$.
\end{lem}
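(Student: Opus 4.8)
The plan is to prove Lemma~\ref{l3.3} by the classical Wiman--Valiron method; I shall only describe the architecture, since the underlying estimates are standard and are proved in detail in \cite{JV1}. Write $f(z)=\sum_{n\ge 0}a_nz^{n}$ and keep the notation $\mu(r)=\mu(r,f)=\max_n|a_n|r^{n}$, $\nu(r)=\nu(r,f)$ and $M(r)=M(r,f)$. First I would collect the elementary structural facts: the function $u\mapsto\log\mu(e^{u})$ is non-decreasing and convex, with right derivative $\nu(r)$, so that $\log\mu(r_2)-\log\mu(r_1)=\int_{r_1}^{r_2}\nu(t)\,t^{-1}\,dt$; the central index $\nu(r)$ is integer-valued, non-decreasing, right-continuous and tends to $+\infty$ because $f$ is transcendental (this is already recorded in the excerpt); and elementary Cauchy-type estimates give $\mu(r)\le M(r)\le 2\mu(2r)$.

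The decisive step is the \emph{Gaussian short-sum estimate}. Applying a Borel-type growth lemma to the convex function $u\mapsto\log\mu(e^{u})$, one obtains a set $E\subset(0,+\infty)$ of finite logarithmic measure such that, for $r\notin E$, the central index does not increase too abruptly near $r$; feeding this back into the convexity inequality for $\log\mu$ yields, for a suitably small fixed $\varepsilon>0$ and all $r\notin E$, the coefficient comparison
\[
|a_n|\,r^{n}\le\mu(r)\,\exp\!\Big(-\frac{(n-\nu(r))^{2}}{2\nu(r)}\,\big(1+o(1)\big)\Big)\qquad\text{for }|n-\nu(r)|\le\nu(r)^{1/2+\varepsilon},
\]
together with the cruder bound $|a_n|\,r^{n}\le\mu(r)\,e^{-\nu(r)^{2\varepsilon}}$ when $|n-\nu(r)|>\nu(r)^{1/2+\varepsilon}$. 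Summing, for every $z$ with $|z|=r\notin E$ one gets
\[
f(z)=\sum_{|n-\nu(r)|\le\nu(r)^{1/2+\varepsilon}}a_nz^{n}+O\!\big(\mu(r)\,\nu(r)\,e^{-\nu(r)^{2\varepsilon}}\big),
\]
so that, up to an error that is $\mu(r)$ times a quantity decaying faster than any power of $\nu(r)$, $f(z)$ is a block of about $\nu(r)^{1/2+\varepsilon}$ consecutive monomials centred at the index $n=\nu(r)$.

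Now I would bring in the hypothesis $|f(z)|>M(r)\,\nu(r)^{-1/4+\delta}$. Since $\mu(r)\le M(r)$ and the remainder above is $\mu(r)$ times a factor that beats every power of $\nu(r)$, the remainder is $o(|f(z)|)$; hence $f(z)=(1+o(1))\sum_{|n-\nu(r)|\le\nu(r)^{1/2+\varepsilon}}a_nz^{n}$. Differentiating the power series term by term, $f^{(m)}(z)=\sum_{n}(n)_m\,a_nz^{\,n-m}$ with $(n)_m:=n(n-1)\cdots(n-m+1)$, and for $n$ in the window and $m$ fixed (in fact uniformly for $m=o(\nu(r)^{1/2-\varepsilon})$) one has
\[
(n)_m=\nu(r)^{m}\prod_{i=0}^{m-1}\Big(1+\frac{n-\nu(r)-i}{\nu(r)}\Big)=\nu(r)^{m}\big(1+O(\nu(r)^{-1/2+\varepsilon})\big)=\nu(r)^{m}\,(1+o(1)).
\]
Therefore the window part of $f^{(m)}(z)$ equals $(\nu(r)/z)^{m}$ times the window part of $f(z)$ times $(1+o(1))$; and the complementary tail of $f^{(m)}(z)$ is dominated, via the crude coefficient bound, by $r^{-m}\mu(r)\,\nu(r)^{m}e^{-c\,\nu(r)^{2\varepsilon}}$, which the hypothesis makes $o\big((\nu(r)/z)^{m}f(z)\big)$. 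Combining the last two displays gives $f^{(m)}(z)/f(z)=(\nu(r,f)/z)^{m}(1+o(1))$ for $r\notin E$, which is the assertion.

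The genuine difficulty lies wholly in the second step: establishing the Gaussian decay of $|a_n|r^{n}$ away from the central index, together with the exceptional set $E$ of finite logarithmic measure. This is the core of Wiman--Valiron theory and rests on a Borel-type lemma for the convex function $\log\mu(e^{u})$ — one must throw away the radii at which $\nu(r)$ jumps too much — while the precise exponents ($\tfrac12+\varepsilon$ in the window and $-\tfrac14+\delta$ in the hypothesis) are calibrated so that the central block dominates its complement and so that term-by-term differentiation contributes only a factor $1+o(1)$. Granting that step, the remaining manipulations are routine.
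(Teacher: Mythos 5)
The paper offers no proof of this lemma at all: it is imported verbatim as Satz 21.3 of Jank--Volkmann \cite{JV1}, so there is nothing internal to compare against. Your sketch is a correct outline of exactly the classical Wiman--Valiron argument that underlies the cited result (exceptional set from a Borel-type lemma applied to the convex function $\log\mu(e^{u})$, Gaussian concentration of $|a_n|r^{n}$ in a window of width $\nu(r)^{1/2+\varepsilon}$ about the central index, the hypothesis $|f(z)|>M(r,f)\nu(r,f)^{-1/4+\delta}$ to make the tails negligible, and $(n)_m=\nu(r)^{m}(1+o(1))$ on the window), with the one genuinely hard step --- the coefficient decay off the central index together with the finite-logarithmic-measure exceptional set --- correctly identified and deferred to the reference rather than proved.
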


\begin{lem}\label{l3} Let $P_1(\not\equiv 0)$, $P_2(\not\equiv 0)$ and $Q(\not\equiv 0)$ be rational functions, $\alpha$ be a polynomial and let $k$, $m$ and $n$ be positive integers. If $f$ be a transcendental meromorphic solution of the following differential equation
\bea\label{lem} (P_1f)^m+\left(P_2f^{(k)}\right)^n=Qe^{\alpha},\eea
such that $P_1e^{-\alpha/m}f$ is transcendental, then
\bea\label{lem1} \frac{1}{m}+\frac{1}{n}\geq 1.\eea
\end{lem}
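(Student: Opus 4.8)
The plan is to argue by contradiction: suppose $f$ is a transcendental meromorphic solution of (\ref{lem}) with $P_1e^{-\alpha/m}f$ transcendental, but $\tfrac1m+\tfrac1n<1$. Set $g:=P_1e^{-\alpha/m}f$, so that $g$ is transcendental meromorphic and $(P_1f)^m = e^{\alpha}g^m$. The idea is to rewrite (\ref{lem}) as a relation among $g$, $g^{(j)}$ and $e^{\alpha/m}$ in which the term $g^m$ (of the top degree $m$) is isolated, and then apply a Clunie-type argument (Lemma \ref{l3.1}) to control $m(r,\,\cdot\,)$, combined with the Second Main Theorem to get a lower bound on $T(r,f)$ forcing the exponent inequality. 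First I would express $f = e^{\alpha/m}g/P_1$ and compute $f^{(k)}$ by the product/Leibniz rule; since $(e^{\alpha/m})^{(j)} = e^{\alpha/m}\cdot(\text{polynomial in }\alpha',\dots,\alpha^{(j)})$ and $1/P_1$ differentiates to rational functions, one gets $f^{(k)} = e^{\alpha/m}\,L_k(g)$, where $L_k(g)=\sum_{j=0}^{k} c_j g^{(j)}$ is a linear differential polynomial in $g$ with rational-function coefficients $c_j$ (depending on $P_1$, $\alpha$ and their derivatives), and $c_k = 1/P_1$.

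Substituting into (\ref{lem}) and dividing by $e^{\alpha}$ (using $m\mid\alpha$ only in the exponent $\alpha/m$, and that $n\alpha/m$ may not be a polynomial — here I would instead keep things multiplicative) gives
\[
e^{\alpha}g^m + e^{n\alpha/m}\big(P_2 L_k(g)\big)^n = Q e^{\alpha},
\]
hence, after dividing by $e^{\alpha}$,
\[
g^m - Q = -\,e^{(n/m-1)\alpha}\big(P_2 L_k(g)\big)^n .
\]
If $\alpha$ is non-constant then $(n/m-1)\alpha$ is a non-constant polynomial (as $n<m$ is excluded only when $n=m$, which is allowed; I must handle $n=m$ separately — there the RHS exponential disappears and one compares degrees directly, or rather $\tfrac1m+\tfrac1n = \tfrac2m \le 1$ already for $m\ge2$, and $m=n=1$ is excluded by transcendence of $g$). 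Assuming $n\ne m$, the factor $e^{(n/m-1)\alpha}$ has order $\ge1$; I would then estimate proximity functions: by the lemma of logarithmic derivatives $m(r,L_k(g)/g)=S(r,g)$ wherever the $g^{(j)}/g$ are controlled, and a Clunie argument applied to $g^m\big(1 - Q g^{-m}\big) = -e^{(n/m-1)\alpha}P_2^n L_k(g)^n$ — rewriting $L_k(g)^n$ as $g^n$ times a differential polynomial of degree $n$ in $g$ wait, it is degree $\le n$ in $g$ but the point is it has degree $\le n < m$, so Lemma \ref{l3.1} applies after pulling out $g^{\min(m,n)}$ — would force $m(r, e^{(n/m-1)\alpha})=S(r,g)$, contradicting that $e^{(n/m-1)\alpha}$ is a transcendental entire (or reciprocal-entire) function of positive order once $g$ is transcendental, \emph{provided} $T(r,g)$ dominates, which is exactly where the inequality $\tfrac1m+\tfrac1n\ge1$ enters via a counting of zeros and poles.

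More precisely, the cleanest route is the classical value-distribution estimate: from $g^m + (P_2L_k(g))^n e^{(n/m-1)\alpha}\cdot e^{\alpha} \cdot e^{-\alpha}= Q$ one treats $F:=g$ and uses that $1/g$, $1/(g^m-Q)$ etc. have few poles; applying the Second Main Theorem to $g$ with the three targets $0,\infty$ and the zeros of $g^m-Q$ (equivalently the zeros of $P_2L_k(g)$), one gets
\[
T(r,g)\le \overline N(r,0;g)+\overline N(r,\infty;g)+\overline N\big(r,0;P_2L_k(g)\big)+S(r,g),
\]
and each term on the right is bounded using $N(r,0;g^{(j)})\le N(r,0;g)+ k\,\overline N(r,\infty;g)+S(r,g)$ and the structure of (\ref{lem}): poles of $g$ force poles of the LHS that must be cancelled by $Q$, zeros of $g$ of high order force the two LHS terms to have mismatched orders unless $m$ and $n$ cooperate. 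Carrying out this bookkeeping at a zero $z_0$ of $g$ of multiplicity $p$: the first term $g^m$ vanishes to order $mp$, while $(P_2L_k(g))^n$ vanishes to order $n(p-k)$ at best (if $P_2$ is regular there), and for these to sum to the rational function $Q$ we need $\min(mp, n(p-k))$ controlled, which upon summing over all zeros yields precisely $\big(\tfrac1m+\tfrac1n\big)\,\overline N(r,0;g)\gtrsim T(r,g)$ up to $S(r,g)$; combined with the SMT inequality this gives $\tfrac1m+\tfrac1n\ge1$.

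The main obstacle I anticipate is the careful local analysis at the zeros and poles of $g$ in the presence of the rational coefficients $P_1,P_2,Q$ and the exponential factor $e^{\alpha/m}$: one must verify that the rational functions do not conspire to cancel the leading behavior (they contribute only $O(\log r)=S(r,g)$ to all counting functions, but tracking the exact multiplicities — especially where $P_2$ has zeros or poles near zeros of $g$, and where $L_k(g)$ itself vanishes without $g$ vanishing — requires case distinctions), and that the hypothesis ``$P_1e^{-\alpha/m}f$ transcendental'' is genuinely used to exclude the degenerate situation in which $g$ reduces to a rational multiple of something that trivializes the estimate. A secondary technical point is justifying that $e^{(n/m-1)\alpha}$, when $n\ne m$, cannot be absorbed into $S(r,g)$; this follows since its Nevanlinna characteristic grows like $r^{\deg\alpha}$, which is $\ge r$, and an $S(r,g)$ term is $o(T(r,g))$ outside a small set, so once $T(r,g)\to\infty$ faster than — or comparably to — $r^{\deg\alpha}$ the two cannot coincide; when $\deg\alpha$ could a priori exceed the growth of $g$ one instead uses that $f$ itself then has order $\ge\deg\alpha$, reconciling via $g=P_1e^{-\alpha/m}f$. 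Once all counting functions are pinned down, the inequality (\ref{lem1}) drops out as the arithmetic constraint $\tfrac1m+\tfrac1n\ge1$.
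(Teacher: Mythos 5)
There is a genuine gap, and it is the same gap in both branches you sketch: you never establish the relation $mT(r,F)=nT(r,G)+S(r,F)$ between the two summands, which is the ingredient that makes the final arithmetic close. The paper normalizes the two terms \emph{separately}, setting $F=P_1e^{-\alpha/m}f$ and $G=P_2e^{-\alpha/n}f^{(k)}$ (note the exponent $\alpha/n$, not $\alpha/m$, in the second factor), so the equation becomes $F^m+G^n=Q$ with a purely rational right-hand side and no exponential left anywhere; Mohon'ko's theorem (Lemma \ref{l2}, applied to $F^m=Q-G^n$) then gives $mT(r,F)=nT(r,G)+S$, and the Second Main Theorem is applied not to $F$ but to the auxiliary function $h=(F^m-Q)/F^m$, whose characteristic is $mT(r,F)+S(r,F)$. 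The three values $0,1,\infty$ of $h$ are controlled by $\overline{N}(r,0;G)$, the finitely many poles of $F$, and $\overline{N}(r,0;F)$ respectively, yielding $mT(r,F)\le T(r,F)+\tfrac{m}{n}T(r,F)+S(r,F)$, i.e.\ $\tfrac1m+\tfrac1n\ge 1$. In your version the Second Main Theorem is applied to $g$ itself, so the left-hand side is only $T(r,g)$ and the factor $m$ that must beat $1+\tfrac mn$ never appears; no Leibniz expansion of $f^{(k)}$ is needed at all.

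Concretely: (a) your Clunie branch fails because the coefficient $e^{(n/m-1)\alpha}$ need not be a small function of $g$ --- the lemma carries no hypothesis relating $\deg\alpha$ to the growth of $f$, and your closing remark about reconciling the orders is not an argument; moreover, even granting smallness, factoring $g^m=g^n\cdot g^{m-n}$ and invoking Lemma \ref{l3.1} yields only $(m-n)\,m(r,g)=S(r,g)$, which is vacuous in the critical case $m=n\ge 3$ that must still be excluded, and unavailable when $n>m$. (b) Your multiplicity bookkeeping at zeros of $g$ rests on a false premise: at a zero $z_0$ of $g$ the identity $g^m-Q=-e^{(n/m-1)\alpha}(P_2L_k(g))^n$ reads $-Q(z_0)=(\text{RHS})(z_0)$, generically a nonzero finite number, so the equation imposes no multiplicity constraint there. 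The constraint lives at the $Q$-points of $F^m$, where $G^n$ vanishes, and that is exactly what the paper's estimate $\overline{N}(r,0;h)\le\overline{N}(r,0;G)\le T(r,G)=\tfrac mn T(r,F)+S$ encodes. As written, your key inequality $(\tfrac1m+\tfrac1n)\overline{N}(r,0;g)\gtrsim T(r,g)$ is asserted rather than derived, so the proof does not close.
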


\begin{proof} Let $f$ be a transcendental meromorphic solution of the Eq. (\ref{lem}).
It is easy to verify from (\ref{lem}) that $f$ has only finitely many poles. Again from (\ref{lem}), we get
\bea\label{lem3} F^m+G^n=Q,\eea
where $F=P_1e^{-\alpha/m}f$ and $G=P_2e^{-\alpha/n}f^{(k)}$. By the given condition $F$ is transcendental and so from (\ref{lem3}), we deduce that $G$ is also transcendental. Clearly both $F$ and $G$ have only finitely many poles. Using Lemma \ref{l2} to (\ref{lem3}), we get
\bea\label{lem4} mT(r,F)+S(r,F)=nT(r,G)+S(r,G).\eea

Let
\bea\label{lem5} h=\frac{F^{m}-Q}{F^{m}}.\eea

Clearly $h$ is a non-constant. Again using Lemma \ref{l2} to (\ref{lem5}), we get 
\[T(r,h)+o(T(r,h))=mT(r,F)+S(r,F).\]

Now using (\ref{lem3}) to (\ref{lem5}), we get
\[\ol N(r,h)\leq \ol N(r,0, F^{m})\leq N(r,Q)+\ol N(r,0,F^{m})=\ol N(r,0,F)+S(r,F),\]
\[\ol N(r,0,h)=\ol N(r,Q,F^{m})\leq \ol N(r,0,G^{n})\leq \ol N(r,0,G)+S(r,G)\]
and 
\[\ol N(r,1,h)\leq \ol N(r,F^{m})+\ol N(r,Q)=\ol N(r,f)+S(r,F)=S(r,F).\] 

Therefore by second fundamental theorem, we get
\beas mT(r,F)=T(r,h)+S(r,h)&\leq& \ol N(r,h)+\ol N(r,0,h)+\ol N(r,1,h)+S(r,h)\nonumber\\&\leq&
\ol N(r,0,F)+\ol N(r,0,G)+S(r,F)+S(r,G)\nonumber\\&\leq&
T(r,F)+T(r,G)+S(r,F)+S(r,G)\nonumber
\eeas
and so from (\ref{lem4}), we obtain $\left(m-1-\frac{m}{n}\right)T(r,F)\leq S(r,F)$,
which shows that $\frac{1}{m}+\frac{1}{n}\geq 1$.  
\end{proof}

\section {{\bf Proof of Theorem \ref{t2.1}}} 
\begin{proof} Let $f$ be a transcendental meromorphic solution of the Eq. (\ref{2.c}) such that one of the following holds:
\begin{enumerate}
\item[(i)] $\rho(f)=+\infty$;
\item[(ii)] $\rho(f)\leq +\infty$, when $\deg(\alpha)=0$;
\item[(iii)] $\deg(\alpha)<\mu(f)$, when $\deg(\alpha)>0$.
\end{enumerate}

 Note that if $\rho(f)=+\infty$, then obviously $T(r,e^{\alpha})=S(r,f)$. Again if $0<\deg(\alpha)<\mu(f)$, then we have $0<\deg(\alpha)=\rho(e^{\alpha})<\mu(f)$ and so by Theorem 1.18 \cite{YY1}, we get $T(r,e^{\alpha})=S(r,f)$. Thus in either case, we have $T(r,e^{\alpha})=S(r,f)$. Also from (\ref{2.c}) it is easy to verify that $N(r,f)=O(\log r)$, i.e., $f$ has only finitely many poles. Now we consider the following cases.
 
\smallskip
{\bf Case 1.} Let either $n>m>1$ or $m>n>1$. Then from (\ref{lem1}), we get a contradiction. Hence in these cases, $f$ can not be a solution of the equation (\ref{2.c}).

\smallskip
{\bf Case 2.} Let $m>n=1$. Then from (\ref{2.c}), we have
\bea\label{s.1} f^m+Rf^{(k)}=Qe^{\alpha}.\eea

Since $N(r,f)=O(\log r)$, using Lemma \ref{l2} to (\ref{s.1}), we get
\beas m T(r,f)=T\left(r, f^{m}\right)+S(r,f)
& =&T\left(r,Rf^{(k)}-Qe^{\alpha}\right)+S(r,f) \nonumber\\
&\leq & T\left(r,f^{(k)}\right)+S(r,f) \nonumber\\
&\leq & m\left(r, \frac{f^{(k)}}{f}\right)+m(r,f)+S(r,f)\\
&\leq& T(r, f)+S(r, f),\eeas
which is impossible.

\smallskip
{\bf Case 3.} Let $n>m=1$ and $n\geq k+1$. Then from (\ref{2.c}), we have
\bea\label{s.2} f+\left(Rf^{(k)}\right)^n=Qe^{\alpha}.\eea

Differentiating (\ref{s.2}) once, we get $n(Rf^{(k)})^{n-1}(Rf^{(k)})^{(1)}=-(f-Qe^{\alpha})^{(1)}$ and so
\bea\label{s.3} n^n\left(f-Qe^{\alpha}\right)^{n-1}\left(\left(Rf^{(k)}\right)^{(1)}\right)^n=-\left((f-Qe^{\alpha})^{(1)}\right)^n.\eea

Let $z_0$ be a zero of $f-Qe^{\alpha}$ of multiplicity $p_0$ such that $R(z_0)\neq \infty$. Obviously $z_0$ is a zero of $(f-Qe^{\alpha})^{(1)}$ of multiplicity $p_0-1$. If $p_0<n$, then from (\ref{s.3}), we immediately get a contradiction. Hence $p_0\geq n\geq k+1$. Clearly $z_0$ is a zero of  $f^{(k)}-(Qe^{\alpha})^{(k)}$ and so $f-Qe^{\alpha}=0\Rightarrow f^{(k)}-(Qe^{\alpha})^{(k)}=0$ except for the poles of $R$. Again from (\ref{s.2}), we see that 
$f-Qe^{\alpha}$ and $Rf^{(k)}$ share $0$ IM.
Now we consider the following two sub-cases:

\smallskip
{\bf Sub-case 3.1.} Let $(Qe^{\alpha})^{(k)}\not\equiv 0$. Suppose $(Qe^{\alpha})^{(k)}=\tilde Q e^{\alpha}$, where $\tilde Q(\not\equiv 0)$ is a rational function. Since $f-Qe^{\alpha}=0\Rightarrow f^{(k)}-(Qe^{\alpha})^{(k)}=0$ except for the poles of $R$ and $f-Qe^{\alpha}$ and $Rf^{(k)}$ share $0$ IM, it follows that $f-Qe^{\alpha}=0\Rightarrow \tilde Q=0$ except for the poles of $R$ and so $f-Qe^{\alpha}$ has only finitely many zeros. Consequently $f^{(k)}$
has only finitely many zeros. Let $g=f-Qe^{\alpha}$. Then by Lemma \ref{l3}, we have 
\beas T(r,g)\leq T(r,g^{(k)})+N_{k+1}(r,0;g)-\ol N(r,0;g^{(k)})+S(r,g)\eeas
and so by second fundamental theorem for small function (see \cite{KY1}) and Lemma \ref{l2}, we get
\beas T(r,f)&\leq& T(r,f^{(k)})+N_{k+1}(r,0;g)-\ol N(r,0;g^{(k)})+S(r,f)\\&\leq&
\ol N(r,f^{(k)})+\ol N(r,0;f^{(k)})+\ol N(r,(Qe^{\alpha})^{(k)};f^{(k)})+N(r,Qe^{\alpha};f)\\&&-\ol N(r,(Qe^{\alpha})^{(k)};f^{(k)})+S(r,f)\\&\leq& O(\log r)+S(r,f)=S(r,f),\eeas
which is impossible.

\smallskip
{\bf Sub-case 3.2.} Let $(Qe^{\alpha})^{(k)}\equiv 0$. Clearly $\alpha$ is a constant and $Q(\not\equiv 0)$ is a polynomial of degree atmost $k-1$. Let $g=f-Qe^{\alpha}$. Then $g^{(k)}=f^{(k)}$ and so from (\ref{s.2}), we get
\bea\label{s.4} \left(Rg^{(k)}\right)^n=-g.\eea

\smallskip
First we suppose that $k=1$. Differentiating (\ref{s.4}) once, we get
\bea\label{s.5} n R^{n-1}(g^{(1)})^{n-1}\left(R^{(1)}g^{(1)}+Rg^{(2)}\right)=-g^{(1)}.\eea

Note that $g$ is transcendental and so $Rg^{(1)}$ is also transcendental.
Since $n\geq 2$, using Lemma \ref{l3.1} to (\ref{s.5}), we get $m(r,R^{(1)}g^{(1)}+Rg^{(2)})=S(r,g^{(1)})$, i.e., $T(r,R^{(1)}g^{(1)}+Rg^{(2)})=S(r,g^{(1)})$, i.e., $T(r,(Rg^{(1)})^{(1)})=S(r,Rg^{(1)})$, which contradicts the fact that $Rg^{(1)}$ is transcendental.

\smallskip
Next we suppose that $k\geq 2$. Now we divide following two sub-cases:

\smallskip
{\bf Sub-case 3.2.1.} Let $R$ be rational function having no zeros. Then from (\ref{s.4}), it is easy to deduce that $g$ is a transcendental entire function and so $M(r,g)\rightarrow \infty$ as $r\rightarrow \infty$. Again let $M(r,g)=|g(z)|$, where $z_r=r\exp(i\theta)$ and $\theta\in [0,2\pi)$.
Consequently the inequality $|g(z_r)|>M(r,g) (\nu(r,g))^{-\frac{1}{4}+\delta}$ holds when
$0<\delta<1/4$. Now by Lemma \ref{l3.3}, there exists a subset $E\subset [1,+\infty)$ with finite logarithmic measure such that for $z_r=r\exp(i \theta)(\theta\in [0,2\pi))$ satisfying $|z_r|=r\not\in E$ and $M(r,g)=|g(z_r)|$, we get
\bea\label{s.4a} \frac{g^{(k)}(z_r)}{g(z_r)}=\left(\frac{\nu(r,g)}{z_r}\right)^{k}(1+o(1))\;\text{as}\;r\rightarrow \infty.\eea

On the other hand, from (\ref{s.4}), we get
\bea\label{s.4b} \left(\frac{g^{(k)}(z_r)}{g(z_r)}\right)^n=-\frac{\frac{1}{R^n(z_r)}}{g^{n-1}(z_r)}.\eea

Therefore using (\ref{s.4b}) to (\ref{s.4a}), we obtain
\bea\label{s.4c} \nu(r,g)^{nk}(1+o(1))=-\frac{\frac{z_r^{nk}}{R^n(z_r)}}{g^{n-1}(z_r)}.\eea

Note that $\frac{z^{nk}}{R^n(z)}$ is a polynomial. Since $M(r, g^{n-1})$ increases faster than the maximum modulus of any polynomial, from (\ref{s.4c}), it is easy to deduce that $\nu(r,g)^{nk}(1+o(1))$ tends to $0$ as $r\to\infty$, which contradicts the fact that $\nu(r,g)$ tends to $+\infty$ as $r\to\infty$.

\smallskip
{\bf Sub-case 3.2.2.} Let $R$ be rational function having atleast one zero. Clearly $R^{(1)}\not\equiv 0$.

If possible, suppose that $g$ has only finitely many zeros. Also we know that $g$ has only finitely many poles. Now from (\ref{s.4}), we get 
\[-\frac{1}{R^n g^{n-1}}=\left(\frac{g^{(k)}}{g}\right)^n\]
and so using the first fundamental theorem and Lemma \ref{l2}, we obtain
$(n-1)T(r,g)+S(r,g)=S(r,g)$. Since $n\geq k+1$, we get a contradiction. Hence $g$ has infinitely many zeros.
Let $z_0$ be a zero of $g$ of multiplicity $p_0$ such that $R(z_0)\neq 0,\infty$. Then from above, we know that $p_0\geq n\geq k+1$ and so $z_0$ is also a zero of $g^{(k)}$ of multiplicity $p_0-k\geq 1$. Clearly $z_0$ is a zero of $(Rg^{(k)})^n$ of multiplicity $np_0-nk$ and so from (\ref{s.4}), we get $np_0-nk=p_0$, i.e., $p_0(n-1)=nk$. If $p_0\geq n+1$, then since $n\geq k+1$, we have $nk\geq (n+1)(n-1)\geq (n+1)k$, which is impossible. Hence $p_0=n$ and so $n=k+1$. Therefore from (\ref{s.4}), we get
\bea\label{s.6} (Rg^{(k)})^{k+1}=-g.\eea

Clearly from (\ref{s.6}), we may assume that $g=h^{k+1}$, where $h$ is a transcendental meromorphic function having finitely many poles.
Since $g$ has infinitely many zeros, it follows that $h$ has also infinitely many zeros. Therefore from (\ref{s.6}), we get
\bea\label{s.7} R(h^{k+1})^{(k)}=th,\eea
where $t^{k+1}=-1$. 
Now from the proof of Theorem 1.1 \cite{Mss}, we can find that
\bea\label{s.8} (h^{k+1})^{(k)}&=&
(k+1)!h(h^{(1)})^{k}+\frac{k(k-1)}{4}(k+1)!h^{2}(h^{(1)})^{k-2}h^{(2)}+\cdots+(k+1)h^{k}h^{(k)}\nonumber\\&=&
(k+1)!h(h^{(1)})^{k}+\frac{k(k-1)}{4}(k + 1)!h^{2}(h^{(1)})^{k-2}h^{(2)}+R_1(h),\eea
where $R_{1}(h)$ is a differential polynomial in $h$ and each term of $R_{1}(h)$ contains $h^{m}\;(3 \leq m \leq k$) as a factor. Therefore using (\ref{s.8}) to (\ref{s.7}), we get
\bea\label{s.9} R\left((k+1)!(h^{(1)})^{k}+\frac{k(k-1)}{4}(k+1)!h(h^{(1)})^{k-2}h^{(2)}+\cdots+(k+1)h^{k-1}h^{(k)}\right)=t.\eea

Differentiating (\ref{s.9}) once, we obtain
\bea\label{s.10} (k+1)!R^{(1)}(h^{(1)})^{k}+(k+1)!\frac{k(k+3)}{4}R(h^{(1)})^{k-1}h^{(2)}+R_2(h)=0,\eea
where $R_{2}(h)$ each term of $R_{2}(h)$ contains $h^{m}\;(1 \leq m \leq k$) as a factor. 
Let $z_1$ be a zero of $h$ of multiplicity $p_1$. If $p_1\geq 2$, then from (\ref{s.9}), we see that $z_1$ must be a pole of $R$ and so $h$ has finitely many multiple zeros. Let $z_1$ be a simple zero of $h$ such that $R(z_1)\neq 0,\infty$ and $R^{(1)}(z_1)\neq 0$. Then from (\ref{s.9}) and (\ref{s.10}), we have respectively 
\[(k+1)!R(z_1)(h^{(1)}(z_1))^{k}=t\] 
and
\[(k+1)!R^{(1)}(z_1)(h^{(1)}(z_1))^{k}+(k+1)!\frac{k(k+3)}{4}R(z_1)(h^{(1)}(z_1))^{k-1}h^{(2)}(z_1)=0.\]

Then a simple computation shows that 
\[R^{(1)}(z_1)h^{(1)}(z_1)+\frac{k(k+3)}{4} R(z_1)h^{(2)}(z_1)=0\]
and so $h=0\Rightarrow R^{(1)}h^{(1)}+\frac{k(k+3)}{4}Rh^{(2)}=0$. Let us define 
\bea\label{s.11} \beta=\frac{R^{(1)}h^{(1)}+\frac{k(k+3)}{4}Rh^{(2)}}{h}.\eea

If possible suppose $R^{(1)}h^{(1)}+\frac{k(k+3)}{4}Rh^{(2)}\equiv 0$. Then $\frac{h^{(2)}}{h^{(1)}}=-\frac{4}{k(k+3)}\frac{R^{(1)}}{R}$ and so on integration, we have $h^{(1)}=dR^{-\frac{4}{k(k+3)}}$, where $d$ is a non-zero constant. Since $h$ is transcendental, we get a contradiction. Hence $R^{(1)}h^{(1)}+\frac{k(k+3)}{4}Rh^{(2)}\not\equiv 0$ and so $\beta\not\equiv 0$. Since $h$ has only finitely many multiple zeros and $h=0\Rightarrow R^{(1)}h^{(1)}+\frac{k(k+3)}{4}Rh^{(2)}=0$, it is easy to see that $N(r,\beta)=S(r,h)$. Also  $m(r,\beta)=S(r,h)$. Therefore $T(r,\beta)=S(r,h)$.
Now (\ref{s.11}) gives
\bea\label{s.12} h^{(2)}=\frac{4}{k(k+3)R}(R^{(1)}h^{(1)}-\beta h).\eea

Therefore using (\ref{s.12}) to (\ref{s.10}), we get 
\bea\label{s.13}(k+1)!R(h^{(1)})^k+\frac{(k-1)(k+1)!}{(k+3)}\left(R^{(1)}h(h^{(1)})^k-\beta h^2(h^{(1)})^{k-2}\right)+R_3(h)=t,\eea
where each term of $R_{3}(h)$ contains $h^{m}\;(2\leq m \leq k$) as a factor. 
Differentiating (\ref{s.13}) once, we have 
\bea\label{s.14} R^{(1)}(h^{(1)})^k\left((k+1)!+\frac{(k-1)(k+1)!}{(k+3)}\right)+k(k+1)!R(h^{(1)})^{k-1}h^{(2)}+R_4(h)=0,\eea
where each term of $R_{4}(h)$ contains $h^{m}\;(1\leq m \leq k$) as a factor. Now using (\ref{s.12}) to (\ref{s.14}), we have 
\bea\label{s.15}\left((k+1)!+\frac{4(k+1)!}{(k+3)}+\frac{4(k+1)!}{(k+3)}\right)R^{(1)}(h^{(1)})^k+R_5(h)=0,\eea
where each term of $R_{5}(h)$ contains $h^{m}\;(1\leq m \leq k$) as a factor. If $z_1$ is a simple zero of $h$, then from (\ref{s.15}), we observe that $R^{(1)}(z_1)=0$. Finally $h$ has only finitely many zeros, which is a contradiction. 
\end{proof}

\section{\bf{Proof of Theorem  \ref{t2.2}}}
\begin{proof} Let $f$ be a meromorphic solution of the Eq. (\ref{2.d}) such that $\rho(f)=+\infty$.
Then (\ref{lem1}) gives $m=2$ and so from (\ref{2.d}), we have
\bea\label{tp1} (Rf^{(k)}+\iota f)(Rf^{(k)}-\iota f)=Qe^{\alpha},\eea
which shows that $f$ has only finitely many poles. Now in view of (\ref{tp1}), we may assume that
\bea\label{tp2} \left\{\begin{array}{clcr} Rf^{(k)}+\iota f=Q_1e^{P_1},\\
Rf^{(k)}-\iota f=Q_2e^{P_2},\end{array}\right.\eea
where $Q_1$ and $Q_2$ are non-zero rational functions such that $Q_1Q_2\equiv Q$, $P_1$ and $P_2$ are
entire functions such that $P_1+P_2=\alpha$. Solving (\ref{tp2}) for $f^{(k)}$ and $f$, we obtain
\bea\label{tp3} f^{(k)}=\frac{Q_1e^{P_1}+Q_2e^{P_2}}{2R}\eea
and
\bea\label{tp3a} f=\frac{Q_1e^{P_1}-Q_2e^{P_2}}{2\iota}.\eea

Since $P_1+P_2=\alpha$, from (\ref{tp3a}), we deduce that $P_1$ and $P_2$ are transcendental functions. Clearly $P_1^{(1)}$and $P_2^{(1)}$ are also transcendental functions. Now differentiating (\ref{tp3a}) $k-$times, we get
\bea\label{tp4} f^{(k)}=\frac{\left(Q_1(P_1^{(1)})^k+\tilde{P}_{k-1}(P_1^{(1)})\right)e^{P_1}-\left(Q_2(P_2^{(1)})^k+\tilde{Q}_{k-1}(P_2^{(1)})\right)e^{P_2}}{2\iota},\eea
where $\tilde{P}_{k-1}(P_1^{(1)})$ and $\tilde{Q}_{k-1}(P_2^{(1)})$ are differential polynomials in $P_1^{(1)}$ and $P_2^{(1)}$ of degree $k-1$ with coefficients respectively $Q_1$, $Q_1^{(1)}$,$\ldots$, $Q_1^{(k)}$ and $Q_2, Q_2^{(1)},\ldots, Q_2^{(k)}$.
Then (\ref{tp3}) and (\ref{tp4}) give 
\bea\label{tp5} \left(RQ_1(P_1^{(1)})^k+R\tilde{P}_{k-1}(P_1^{(1)})-\iota Q_1\right)e^{P_1}=\left(RQ_2(P_2^{(1)})^k+R\tilde{Q}_{k-1}(P_2^{(1)})+\iota Q_2\right)e^{P_2}.\eea

If possible suppose $RQ_1(P_1^{(1)})^k+R\tilde{P}_{k-1}(P_1^{(1)})-\iota Q_1\equiv 0$. Then 
\[RQ_1(P^{(1)})^{k-1}P^{(1)}\equiv -R\tilde{P}_{k-1}(P^{(1)})+\iota Q_1\]
and so using Lemma \ref{l3.1}, we get $m(r,P^{(1)})=S(r,P^{(1)})$, i.e., $T(r,P^{(1)})=S(r,P^{(1)})$, which is impossible. Hence $RQ_1(P_1^{(1)})^k+R\tilde{P}_{k-1}(P_1^{(1)})-\iota Q_1\not\equiv 0$. 

Similarly we can prove that $RQ_2(P_2^{(1)})^k+R\tilde{Q}_{k-1}(P_2^{(1)})+\iota Q_2\not\equiv 0$. Since $P_1+P_2=\alpha$, we have $P_2^{(1)}=-P_1^{(1)}+\alpha^{(1)}$. Therefore from (\ref{tp5}), we get 
\beas\label{tp6}&&\left(RQ_1(P_1^{(1)})^k+R\tilde{P}_{k-1}(P_1^{(1)})-\iota Q_1\right)e^{2P_1-\alpha}\nonumber\\
=&&RQ_2(-P_1^{(1)}+\alpha^{(1)})^k+R\tilde{Q}_{k-1}(-P_1^{(1)}+\alpha^{(1)})+\iota Q_2\eeas
and so by Lemma \ref{l2}, we get $T(r,e^{2P_1-\alpha})=S(r,e^{2P_1-\alpha})$, which is a contradiction.
\end{proof}

\section {{\bf Proof of Theorem \ref{t2.3}}} 
\begin{proof} Let $f$ be a meromorphic solution of the Eq. (\ref{2.e}). Now by Theorem \ref{t2.2}, we have $\rho(f)<+\infty$. We rewrite the Eq. (\ref{2.e}) as follows
\bea\label{pt1} \left(e^{-\alpha/m}f\right)^m+\left(Re^{-\alpha/m}f^{(k)}\right)^m=1,\eea
which shows that $f$ has no poles. Let us take $F=e^{-\alpha/m}f$ and $G=Re^{-\alpha/m}f^{(k)}$. If $F$ and $G$  are non-constants, then proceeding in the same way as done in the proof of Lemma \ref{l3}, we can prove that $2/m\geq 1$, i.e., $m\leq 2$. Since $m\geq 2$, we get $m=2$.
Now we consider the following cases.

\smallskip
{\bf Case 1.} Let $m\geq 3$. In this case $F$ and $G$ must be constant. Let $F=c_1\in \mathbb{C}\setminus  \{0\}$ and $G=c_2\in \mathbb{C}\backslash \{0\}$. Then $f=c_1e^{\alpha/m}$ and $f^{(k)}=\frac{c_2}{R}e^{\alpha/m}$. Also from (\ref{pt1}), we have $c_1^m+c_2^m=1$. Now differentiating both sides of $f=c_1e^{\alpha/m}$ $k$-times, we get
\bea\label{pt2}f^{(k)}=c_1\left(\left(\alpha^{(1)}/m\right)^k+\tilde{P}_{k-1}\left(\alpha^{(1)}/m\right)\right)e^{\alpha/m},\eea
where $\tilde{P}_{k-1}\left(\alpha^{(1)}/m\right)$ is a differential polynomial in $\alpha_1^{(1)}/m$ of degree $k-1$ with constant coefficients. Since $f^{(k)}=\frac{c_2}{R}e^{\alpha/m}$, from (\ref{pt2}), we obtain
\bea\label{pt3}c_1R\left(\left(\alpha^{(1)}/m\right)^k+\tilde{P}_{k-1}\left(\alpha^{(1)}/m\right)\right)=c_2,\eea
which shows that $R$ reduces to a constant and $\alpha$ is a polynomial of degree at most one. Let $\alpha(z)=az+b$. Then (\ref{pt3}) gives $c_1R(a/m)^k=c_2$, which shows that $a\neq 0$. Since $c_1^m+c_2^m=1$, it follows that $c_1^m\left(R^m(a/m)^{km}+1\right)=1$. Finally, we have $f(z)=ce^{\frac{az+b}{m}}$,
 where $R$ reduces to a constant and $a(\neq 0), b, c(\neq 0)$ are constants such that $c^m\left(R^m(a/m)^{km}+1\right)=1$.

\smallskip
{\bf Case 2.} Let $m=2$. According to Theorem 4 \cite{FG2}, we find that the equation (\ref{pt1}) has entire solutions of the from $e^{-\alpha/2}f=\sin h$ and $Re^{-\alpha/2}f^{(k)}=\cos h$, where $h$ is an entire function. Since $\rho(f)<+\infty$, it follows that $h$ is a polynomial. Therefore
\bea\label{pt4} f=\frac{e^{P_1}-e^{P_2}}{2\iota}\eea
and
\bea\label{pt5} f^{(k)}=\frac{e^{P_1}+e^{P_2}}{2 R},\eea
where $P_1=\frac{\alpha}{2}+\iota h$ and $P_2=\frac{\alpha}{2}-\iota h$. Now differentiating (\ref{pt4}) $k$-times, we get
\bea\label{pt6}f^{(k)}=\frac{\left(\left(P_1^{(1)}\right)^k+\tilde{P}_{k-1}\left(P_1^{(1)}\right)\right)e^{P_1}-\left(\left(P_2^{(1)}\right)^k+\tilde{P}_{k-1}\left(P_2^{(1)}\right)\right)e^{P_2}}{2\iota},\eea
where $\tilde{P}_{k-1}(g)$ is a differential polynomial in $g$ of degree $k-1$ with constant coefficients.
Then (\ref{pt5}) and (\ref{pt6}) give 
\bea\label{pt7} \left(R\left(\left(P_1^{(1)}\right)^k+\tilde{P}_{k-1}\left(P_1^{(1)}\right)\right)-\iota \right)e^{P_1}=\left(R\left(\left(P_2^{(1)}\right)^k+\tilde{P}_{k-1}\left(P_2^{(1)}\right) \right)+\iota \right)e^{P_2}.\eea

We consider the following two sub-cases:

\smallskip
{\bf Sub-case 2.1.} Let $R\big((P_1^{(1)})^k+\tilde{P}_{k-1}(P_1^{(1)})\big)\not\equiv \iota$. Then  $R\big((P_2^{(1)})^k+\tilde{P}_{k-1}(P_2^{(1)})\big)\not\equiv -\iota$ and so from (\ref{pt7}), we get
\bea\label{pt8} \left(R\left(\left(P_1^{(1)}\right)^k+\tilde{P}_{k-1}\left(P_1^{(1)}\right)\right)-\iota \right)e^{2\iota h}=R\left(\left(P_2^{(1)}\right)^k+\tilde{P}_{k-1}\left(P_2^{(1)}\right) \right)+\iota.\eea

If $h$ is non-constant, then using Lemma \ref{l2} to (\ref{pt8}), we get a contradiction. Hence $e^{\iota h}=d$, a constant. Clearly $P_1^{(1)}=P_2^{(1)}=\frac{\alpha^{(1)}}{2}$. Now from (\ref{pt8}), we get $d^2\neq \pm 1$ and 
\bea\label{pt9}R\left(\left(\frac{\alpha^{(1)}}{2}\right)^k+\tilde{P}_{k-1}\left(\frac{\alpha^{(1)}}{2}\right)\right)=\iota \frac{d^2+1}{d^2-1},\eea
which shows that $R$ reduces to a constant and $\alpha$ is a polynomial of degree one. Let $\alpha(z)=az+b$, where $a(\neq 0),b$ are constants. Then (\ref{pt9}) gives $d^2(R(a/2)^k-\iota)=R(a/2)^k+\iota$. Finally from (\ref{pt4}), we have 
\[f(z)=\frac{d^2-1}{2\iota d}e^{\frac{az+b}{2}},\]
where $R$ reduces to a constant, $a(\neq 0), b, d(\neq 0)\in\mathbb{C}$ such that $d^2(R(a/2)^k-\iota)=R(a/2)^k+\iota$.

\smallskip
{\bf Sub-case 2.2.} Let $R\big((P_1^{(1)})^k+\tilde{P}_{k-1}(P_1^{(1)})\big)\equiv \iota$. Then  $R\big((P_2^{(1)})^k+\tilde{P}_{k-1}(P_2^{(1)})\big)\equiv -\iota$. In this case also $R$ reduces to a constant and both $P_1$ and $P_2$ are polynomials of degree one. Therefore
\bea\label{pt10}R\left(\frac{\alpha^{(1)}}{2}+\iota h^{(1)}\right)^k=\iota\eea
and 
\bea\label{pt11}R\left(\frac{\alpha^{(1)}}{2}-\iota h^{(1)}\right)^k=-\iota.\eea

It is easy to verify from (\ref{pt10}) and (\ref{pt11}) that $\deg(\alpha)\leq 1$ and $\deg(h)\leq 1$. If $\deg(h)=0$, then from (\ref{pt10}) and (\ref{pt11}), we immediately get a contradiction. Hence $\deg(h)=1$. Again if $\deg(\alpha)=0$, then from (\ref{pt10}) and (\ref{pt11}), we deduce that $k$ is odd. Let $\alpha(z)=a_1z+a_2$ and $h(z)=b_1z+b_2$, where $a_1,a_2, b_1(\neq 0),b_2$ are constants.
Finally from (\ref{pt4}), we have 
\[f(z)=e^{\frac{a_1z+a_2}{2}}\sin (b_1z+b_2),\]
where $R$ reduces to a constant, $a_1,a_2, b_1(\neq 0),b_2$ are constants such that $R\left(\frac{a_1}{2}+\iota b_1\right)^k=\iota$ and $R\left(\frac{a_1}{2}-\iota b_1\right)^k=-\iota$. Furthermore if $a_1=0$, then $k$ is odd.
\end{proof}

\section {{\bf Proof of Theorem \ref{t2.4}}} 

\begin{proof}
Let $f$ be a transcendental meromorphic solution of the Eq. (\ref{2.d}). Clearly from (\ref{2.d}), we deduce that $f$ has only finitely many poles. Also by Theorem \ref{t2.2}, we have $\rho(f)<+\infty$.
Now we consider the following two cases.

\smallskip
{\bf Case 1.} Let $e^{-\alpha/m}f$ be a rational function. Let $e^{-\alpha/m}f=R_1$, where $R_1(\not\equiv 0)$ be a rational function. Then 
\bea\label{tpa}f=R_1e^{\alpha/m}.\eea

Since $f$ is transcendental, it follows that $\alpha$ is a non-constant polynomial. Let 
\[g=\frac{R_1^{(1)}}{R_1}+\frac{1}{m}\alpha^{(1)}.\]

Now using Lemma \ref{l3.2} to (\ref{tpa}), we get
\bea\label{tpb} f^{(k)}=\left(R_1e^{\alpha/m}\right)^{(k)}=\left(g^k+R_{k-1}(g)\right)R_1e^{\alpha/m},\eea
where $R_{k-1}(g_1)$ is a differential polynomial with constant coefficients of degree $k-1$. 
Therefore using (\ref{tpa}) and (\ref{tpb}) to (\ref{2.d}), we get
\bea\label{tpc}(g^k+R_{k-1}(g))^m=\frac{Q-R_1^m}{(RR_1)^m}.\eea

Now letting $|z|\rightarrow \infty$ on the both sides of (\ref{tpc}), one can easily conclude that 
\[mk\deg(\alpha^{(1)})=\deg(Q-R_1^m)-m\deg(RR_1).\]

Finally, we have $f=R_1e^{\alpha/m}$, where $R_1(\not\equiv 0)$ is a rational function such that $mk\deg(\alpha^{(1)})=\deg(Q-R_1^m)-m\deg(RR_1)$ and $R_1^m+R^m e^{-\alpha}\left((R_1e^{\alpha/m})^{(k)}\right)^m=Q$.

\medskip
{\bf Case 2.} Let $e^{-\alpha/m}f$ be transcendental.
Then from (\ref{lem1}), we get $m=2$ and so from (\ref{2.d}), we obtain
\bea\label{tp1} (Rf^{(k)}+\iota f)(Rf^{(k)}-\iota f)=Qe^{\alpha}.\eea

Therefore in view of (\ref{tp1}), we may assume that
\bea\label{tp2} \left\{\begin{array}{clcr} Rf^{(k)}+\iota f=Q_1e^{P_1},\\
Rf^{(k)}-\iota f=Q_2e^{P_2},\end{array}\right.\eea
where $Q_1$ and $Q_2$ are non-zero rational functions such that $Q_1Q_2\equiv Q$, $P_1$ and $P_2$ are polynomials such that $P_1+P_2=\alpha$. Solving (\ref{tp2}) for $f^{(k)}$ and $f$, we obtain
\bea\label{tp3} f^{(k)}=\frac{Q_1e^{P_1}+Q_2e^{P_2}}{2R}\eea
and
\bea\label{tp3a} f=\frac{Q_1e^{P_1}-Q_2e^{P_2}}{2\iota}.\eea

Let 
\[g_i=\frac{Q_i^{(1)}}{Q_i}+P_i^{(1)},\]
where $i=1,2$. Now using Lemma \ref{l3.2}, we get for $i=1,2$
\bea\label{tp7} (Q_ie^{P_i})^{(k)}=R_i(g_i)Q_ie^{P_i},\eea
where
\bea\label{tp8} \tilde R_i(g_i)=g_i^{k}+\frac{k(k-1)}{2}g_i^{k-2}g_i^{(1)}+a_{k}g_i^{k-3}g_i^{(2)}+b_{k}g_i^{k-4}(g_i^{(1)})^{2}+\tilde P_{k-3}(g_i),\eea 
$a_{k}=\frac{1}{6}k(k-1)(k-2)$, $b_{k}=\frac{1}{8}k(k-1)(k-2)(k-3)$ and $\tilde P_{k-3}(g_i)$ is a differential polynomial with constant coefficients, which vanishes identically for $k\leq 3$ and has degree $k-3$ when $k>3$. 

Now differentiating (\ref{tp3a}) $k-$times, we get
\bea\label{tp9} f^{(k)}=\frac{\tilde R_1(g_1)Q_1e^{P_1}-\tilde R_2(g_2)Q_2e^{P_2}}{2\iota}.\eea

Then (\ref{tp3}) and (\ref{tp9}) give 
\bea\label{tp10} Q_1\left(R\tilde R_1(g_1)-\iota \right)e^{P_1}=Q_2\left(R\tilde R_2(g_2)+\iota \right)e^{P_2}.\eea

We consider the following two sub-cases:

\smallskip
{\bf Sub-case 2.1.} Let $R\tilde R_1(g_1)-\iota\not\equiv 0$. Then from (\ref{tp10}), we have $R\tilde R_2(g_2)+\iota\not\equiv 0$ and
\bea\label{tp10a} Q_1\left(R\tilde R_1(g_1)-\iota \right)e^{P_1-P_2}=Q_2\left(R\tilde R_2(g_2)+\iota \right).\eea

If $P_1-P_2$ is non-constant, then using Lemma \ref{l2} to (\ref{tp10a}), we get a contradiction. Hence $P_1-P_2=d$, a constant. Since $P_1+P_2=\alpha$, we have $P_1=\frac{1}{2}(\alpha+d)$ and $P_2=\frac{1}{2}(\alpha-d)$. Now from (\ref{tp3a}), we get
\bea\label{tp10b} f=\frac{d_1^2Q_1-Q_2}{2\iota d_1}e^{\alpha/2},\eea
where $d_1=e^{d/2}$. Since $f$ is non-constant, it follows that $d_1^2Q_1-Q_2\not\equiv 0$ and $\alpha$ is non-constant.

\smallskip
{\bf Sub-case 2.2.} Let $R\tilde R_1(g_1)-\iota\equiv 0$. Then from (\ref{tp10}), we also have $R\tilde R_2(g_2)+\iota\equiv 0$. Therefore from (\ref{tp8}), we have respectively
\bea\label{tp11} g_1^{k}+\frac{k(k-1)}{2}g_1^{k-2}g_1^{(1)}+a_{k}g_1^{k-3}g_1^{(2)}+b_{k}g_1^{k-4}(g_1^{(1)})^{2}+\tilde P_{k-3}(g_1)\equiv \frac{\iota}{R}\eea
and 
\bea\label{tp12} g_2^{k}+\frac{k(k-1)}{2}g_2^{k-2}g_2^{(1)}+a_{k}g_2^{k-3}g_2^{(2)}+b_{k}g_2^{k-4}(g_2^{(1)})^{2}+\tilde P_{k-3}(g_2)\equiv -\frac{\iota}{R}.\eea

Following sub-cases are immediate.

\smallskip
{\bf Sub-case 2.2.1.} Let $R$ be a polynomial. Now letting $|z|\rightarrow \infty$ on the both sides of (\ref{tp11}), one can easily conclude that $R$ is a non-zero constant, say $A$. Again letting $|z|\rightarrow \infty$ on the both sides of (\ref{tp11}) and (\ref{tp12}), we deduce that 
\[(P_1^{(1)})^k\equiv \frac{\iota}{A}\;\;\text{and}\;\;(P_2^{(1)})^k\equiv -\frac{\iota}{A}.\]

On integration, we have
$P_1(z)=a_1z+b_1$ and $P_2(z)=a_2z+b_2$, where $a_1^k=\frac{\iota}{A}$, $a_2^k=-\frac{\iota}{A}$, $b_1$ and $b_2$ are constants. Clearly $\alpha=(a_1+a_2)z+b_1+b_2$. 
On the other hand from (\ref{tp3}) and (\ref{tp3a}), one can easily that both $Q_1$ and $Q_2$ are polynomials. Note that
\beas \left(Q_1e^{P_1}\right)^{(k)}&=&\left(Q_1(P_1^{(1)})^k+\tilde{P}_{k-1}(P_1^{(1)})\right)e^{P_1}\\&=&
\left(Q_1^{(k)}+ka_1Q_1^{(k-1)}+\cdots+ka_1^{k-1}Q_1^{(1)}+a_1^kQ_1\right)e^{P_1},\eeas
i.e.,
\bea\label{tp13}Q_1(P_1^{(1)})^k+\tilde{P}_{k-1}(P_1^{(1)})=Q_1^{(k)}+ka_1Q_1^{(k-1)}+\cdots+ka_1^{k-1}Q_1^{(1)}+a_1^kQ_1.\eea

Similarly we have
\bea\label{tp14} Q_2(P_2^{(1)})^k+\tilde{Q}_{k-1}(P_2^{(1)})=Q_2^{(k)}+ka_2Q_2^{(k-1)}+\cdots+ka_2^{k-1}Q_2^{(1)}+a_2^kQ_2.\eea

Now using (\ref{tp13}) and (\ref{tp14}) into (\ref{tp4}) and then comparing with (\ref{tp3}), we conclude that both $Q_1$ and $Q_2$ are constants. Finally from (\ref{tp3a}), we may assume that
\[f(z)=\frac{Q_1e^{a_1z+b_1}-Q_2e^{a_2z+b_2}}{2\iota},\]
where $R\equiv A(\text{constant})$ such that $Aa_1^k=\iota$, $Aa_2^k=-\iota$, $Q_1$ and $Q_2$ are constants such that $Q_1Q_2=Q$, $b_1$ and $b_2$ are constants such that $\alpha(z)=(a_1+a_2)z+b_1+b_2$. Furthermore if $\alpha$ is constant, i.e., if $a_1+a_2=0$, then $k$ is odd.

\smallskip
{\bf Sub-case 2.2.2.} Let $R$ be a non-constant rational function other than polynomial. If possible suppose $\deg(R)>0$. Note that $P_1^{(1)}\not\equiv 0$ and $P_2^{(1)}\not\equiv 0$. Now letting $|z|\rightarrow \infty$ on the both sides of (\ref{tp11}) and (\ref{tp12}), we immediately get a contradiction. Hence $\deg(R)\leq 0$. We consider following sub-cases.

\smallskip
{\bf Sub-case 2.2.2.1.} Let $\deg(R)=0$. Now letting $|z|\rightarrow \infty$ on the both sides of (\ref{tp11}) and (\ref{tp12}), we deduce that both $P_1$ and $P_2$ are polynomials of degree one. Let 
\[P_1(z)=a_1z+b_1\;\;\text{and}\;\;P_2(z)=a_2z+b_2,\]
 where $a_1(\neq 0), a_2(\neq 0), b_1,b_2$ are constants. Since $P_1+P_2=\alpha$, we have $\alpha(z)=(a_1+a_2)z+b_1+b_2$. Now using (\ref{tp13}) and (\ref{tp14}) into (\ref{tp4}) and then comparing with (\ref{tp3}), we have respectively
\bea\label{ttp1}R\left(Q_1^{(k)}+ka_1Q_1^{(k-1)}+\cdots+ka_1^{k-1}Q_1^{(1)}+a_1^kQ_1\right)=\iota Q_1\eea
and
\bea\label{ttp2}R\left(Q_2^{(k)}+ka_2Q_2^{(k-1)}+\cdots+ka_2^{k-1}Q_2^{(1)}+a_2^kQ_2\right)=-\iota Q_2.\eea

Adding (\ref{ttp1}) and (\ref{ttp2}), we get
\bea\label{tp14a}&&\frac{Q_1^{(k)}}{Q_1}+\frac{Q_2^{(k)}}{Q_2} +k\left(a_1\frac{Q_1^{(k-1)}}{Q_1}+a_2\frac{Q_2^{(k-1)}}{Q_2}\right)+\cdots\nonumber\\
&&+k\left(a_1^{k-1}\frac{Q_1^{(1)}}{Q_1}+a_2^{k-1}\frac{Q_2^{(1)}}{Q_2}\right)+a_1^k+a_2^{k}\equiv 0.\eea

Letting $|z|\rightarrow \infty$ on the both sides of (\ref{tp14a}), we deduce that $a_1^k+a_2^{k}=0$. Let $a_2=ta_1$, where $t^k=-1$. Now (\ref{tp14a}) gives
\bea\label{tp14b}\frac{Q_1^{(k)}}{Q_1}+\frac{Q_2^{(k)}}{Q_2} +ka_1\left(\frac{Q_1^{(k-1)}}{Q_1}+t\frac{Q_2^{(k-1)}}{Q_2}\right)+\cdots+ka_1^{k-1}\left(\frac{Q_1^{(1)}}{Q_1}+t^{k-1}\frac{Q_2^{(1)}}{Q_2}\right)\equiv 0.\eea

We now want to prove that 
\bea\label{tp14bb}\frac{Q_1^{(1)}}{Q_1}+t^{k-1}\frac{Q_2^{(1)}}{Q_2}\equiv 0.\eea

If $k=1$, then from (\ref{tp14b}), we have have $\frac{Q_1^{(1)}}{Q_1}+\frac{Q_2^{(1)}}{Q_2}\equiv 0$ and so (\ref{tp14bb}) is true. Next let $k\geq 2$. If possible suppose $\frac{Q_1^{(1)}}{Q_1}+t^{k-1}\frac{Q_2^{(1)}}{Q_2}\not\equiv 0$. Then (\ref{tp14b}) gives
\bea\label{tp14c}\frac{\frac{Q_1^{(k)}}{Q_1}+\frac{Q_2^{(k)}}{Q_2}}{\frac{Q_1^{(1)}}{Q_1}+t^{k-1}\frac{Q_2^{(1)}}{Q_2}}+ka_1\frac{\frac{Q_1^{(k-1)}}{Q_1}+t\frac{Q_2^{(k-1)}}{Q_2}}{\frac{Q_1^{(1)}}{Q_1}+t^{k-1}\frac{Q_2^{(1)}}{Q_2}}+\cdots+ka_1^{k-1}\equiv 0.\eea

It is easy to verify that 
\[\deg\left(\left(\frac{Q_1^{(k)}}{Q_1}+\frac{Q_2^{(k)}}{Q_2}\right)/\left(\frac{Q_1^{(1)}}{Q_1}+t^{k-1}\frac{Q_2^{(1)}}{Q_2}\right)\right)<0,\]
\[\deg\left(\left(\frac{Q_1^{(k-1)}}{Q_1}+t\frac{Q_2^{(k-1)}}{Q_2}\right)/\left(\frac{Q_1^{(1)}}{Q_1}+t^{k-1}\frac{Q_2^{(1)}}{Q_2}\right)\right)<0, \;\text{etc}.\]

 Now letting $|z|\rightarrow \infty$ on the both sides of (\ref{tp14c}), we get $a_1=0$, which is impossible. Hence 
\[\frac{Q_1^{(1)}}{Q_1}+t^{k-1}\frac{Q_2^{(1)}}{Q_2}\equiv 0\]
for $k\geq 2$. Thus (\ref{tp14bb}) is true for $k\geq 1$. Since $t^k=-1$, from (\ref{tp14bb}), we get 
\[\frac{Q_1^{(1)}}{Q_1}+\frac{Q_2^{(1)}}{Q_2}\equiv 0\]
and so $(Q_1Q_2)^{(1)}\equiv 0$, i.e., $Q^{(1)}\equiv 0$. Clearly $Q$ is a constant. Since $R$ is non-constant, from (\ref{ttp1}) and (\ref{ttp2}), we conclude that both $Q_1$ and $Q_2$ are non-constants. Finally from (\ref{tp3a}), we may assume that
\[f(z)=\frac{Q_1(z)e^{a_1z+b_1}-Q_2(z)e^{a_2z+b_2}}{2\iota},\]
where $Q\equiv B(\text{constant})$, $Q_1$, $Q_2$ and $R$ are non-constant rational functions such that $Q_1Q_2=B$, $\deg(R)=0$ and $a_1(\neq 0), a_2(\neq 0)$, $b_1$ and $b_2$ are constants such that $a_1^k+a_2^k=0$ and $\alpha(z)=(a_1+a_2)z+b_1+b_2$. Furthermore if $\alpha$ is a constant, then $k$ is odd.

\smallskip
{\bf Sub-case 2.2.2.2.} Let $\deg(R)<0$. Now letting $|z|\rightarrow \infty$ on the both sides of (\ref{tp11}) and (\ref{tp12}), we get $k\deg(P_1^{(1)})=-\deg(R)$ and $k\deg(P_2^{(1)})=-\deg(R)$. Adding (\ref{tp11}) and (\ref{tp12}), we get
\bea\label{tp15}&&\left(\frac{Q_1^{(1)}}{Q_1}\right)^k+\left(\frac{Q_2^{(1)}}{Q_2}\right)^k+k\left(\left(\frac{Q_1^{(1)}}{Q_1}\right)^{k-1}P_1^{(1)}+\left(\frac{Q_2^{(1)}}{Q_2}\right)^{k-1}P_2^{(1)}\right)+\ldots\nonumber\\
&&+\left(\left(P_1^{(1)}\right)^k+\left(P_2^{(1)}\right)^k\right)+\tilde P_{k-1}(g_1)+\tilde P_{k-1}(g_2)\equiv 0,\eea
where $\tilde P_{k-1}(g_i)$ is a differential polynomial in $g_i$ with constant coefficients. We see that the highest powers of $z$ in the left hand side of (\ref{tp15}) appear in $\left(P_1^{(1)}\right)^k+\left(P_2^{(1)}\right)^k$. Therefore from (\ref{tp15}), we must have $\left(P_1^{(1)}\right)^k+\left(P_2^{(1)}\right)^k\equiv 0$, i.e., $P_1^{(1)}=tP_2^{(1)}$, where $t^k=-1$. Since $P_1^{(1)}+P_2^{(1)}=\alpha^{(1)}$, we have 
\bea\label{tp15a}(t+1)P_2^{(1)}=\alpha^{(1)}.\eea

Since $P_1^{(1)}=tP_2^{(1)}$, it follows that $\deg(P_1)=\deg(P_2)\geq 1$ and so $P_1^{(1)}\not\equiv 0$. Consequently from (\ref{tp15a}), we conclude that if $\alpha$ is a constant, then $t=-1$ and so $k$ is odd. Let $P_2=P$. Since $P_1^{(1)}=tP_2^{(1)}$, we assume that $P_1=tP+c$, where $c$ is a constant. 
Finally from (\ref{tp3a}), we may assume that
\bea\label{tp16}f(z)=\frac{Q_1(z)e^{tP(z)+c}-Q_2(z)e^{P(z)}}{2\iota},\eea
where $P$ is a non-constant polynomial, $Q_1$, $Q_2$ and $R$ are non-constant rational functions such that $k\deg\left(P^{(1)}\right)=-\deg(R)$, $Q_1Q_2=Q$, $\deg(R)<0$, $c$ and $t$ are constants such that $t^k=-1$ and $(t+1)P^{(1)}=\alpha^{(1)}$. If $\alpha$ is a constant, then $t=-1$ and $k$ is odd.\par

Furthermore assume that all the zeros of $R$ have multiplicities at most $k-1$. Differentiating (\ref{tp16}) $k-$times, we get
\bea\label{tp17}f^{(k)}(z)=\frac{\tilde Q_k(z)e^{tP(z)+c}-\tilde R_k(z)e^{P(z)}}{2\iota},\eea
where $\tilde Q_k(z)$ and $\tilde R_k(z)$ are rational functions such that 
\bea\label{tp18}\tilde Q_k=Q_1^{(k)}+ktP^{(1)}Q_1^{(k-1)}+\ldots+\hat Q_k(P^{(1)})Q_1\eea
and 
\bea\label{tp19}\tilde R_k=Q_2^{(k)}+kP^{(1)}Q_1^{(k-1)}+\ldots+\hat Q_k(P^{(1)})Q_2,\eea
$\hat Q_k(P^{(1)})$ is a differential polynomial in $P^{(1)}$ of degree $k$ with constant coefficients.

Now using (\ref{tp18}) and (\ref{tp19}) into (\ref{tp17}) and then comparing with (\ref{tp3}) (taking $P_1=tP+c$ and $P_2=P$), we get respectively
\bea\label{tp18a}R\left(Q_1^{(k)}+ktP^{(1)}Q_1^{(k-1)}+\ldots+\hat Q_k(P^{(1)})Q_1\right)=\iota Q_1\eea
and 
\bea\label{tp19a}R\left(Q_2^{(k)}+kP^{(1)}Q_1^{(k-1)}+\ldots+\hat Q_k(P^{(1)})Q_2\right)=-\iota Q_2.\eea

Since all the zeros of $R$ have multiplicities at most $k-1$, from (\ref{tp18a}) and (\ref{tp19a}) one can easily deduce that both $Q_1$ and $Q_2$ are polynomials. Therefore $Q$ is also a polynomial. If $k=1$, then by Theorem B, we get that $Q$ is a constant. Clearly both $Q_1$ and $Q_2$ are non-zero constants. Also from (\ref{tp18a}), we get $R=1/P^{(1)}$.
\end{proof}

\section{{\bf Application:}}
The Fermat-type complex differential equation
\beas f^m+\big(Rf^{(k)}\big)^n=Qe^{\alpha},\eeas
 play an important role in the theory of nonlinear differential polynomials and the growth analysis of entire and meromorphic functions. Such equations naturally occur in the study of value-distribution problems, where they help characterize the interaction between a function and its higher-order derivatives. They also arise in complex dynamical systems, particularly in describing exponential forcing and growth-controlled trajectories. In mathematical physics, these equations appear as complex analogues of nonlinear balance laws governing oscillatory and wave-type phenomena. Moreover, they have applications in the geometric theory of holomorphic curves and complex differential geometry, where differential constraints of Fermat type determine curvature and immersion properties. Understanding the existence or non-existence of entire solutions therefore yields meaningful insight into the analytic structure, uniqueness behaviour, and asymptotic properties of complex differential systems.

\vspace{0.1in}
{\bf Compliance of Ethical Standards:}\par

{\bf Conflict of Interest.} The authors declare that there is no conflict of interest regarding the publication of this paper.\par

{\bf Data availability statement.} Data sharing not applicable to this article as no data sets were generated or analysed during the current study.

\end{document}